\DeclareMathOperator*{\argmin}{arg\,min}
\DeclareMathOperator{\prox}{prox}
\newcommand{\qed}{\hfill$\square$}
\newcommand{\norm}[1]{\left\lVert#1\right\rVert}
\newcommand{\expval}[1]{\mathbb{E}\left[#1\right]}
\newcommand{\N}{\mathbb{N}}
\newcommand{\R}{\mathbb{R}}
\newcommand{\kron}{\otimes}
\newcommand{\Em}{\mathbold{E}}
\renewcommand{\Im}{\mathbold{I}}
\newcommand{\Wm}{\mathbold{W}}
\newcommand{\e}{\mathbold{e}}
\newcommand{\x}{\mathbold{x}}
\newcommand{\y}{\mathbold{y}}
\newcommand{\0}{\bm{0}}
\newcommand{\1}{\bm{1}}
\newtheorem{remark}{Remark}
\newtheorem{proposition}{Proposition}
\newtheorem{assumption}{Assumption}
\newtheorem{lemma}{Lemma}
\newtheorem{corollary}{Corollary}
\algrenewcommand\algorithmicrequire{\textbf{Input:}}
\newif\ifarxiv
\begin{document}

\title{Distributed and Inexact Proximal Gradient Method \\ for Online Convex Optimization}

\author{Nicola~Bastianello and
        Emiliano~Dall'Anese%
\thanks{N. Bastianello is with the Department of Information Engineering, University of Padova, Italy, and a visiting student at the University of Colorado Boulder;  \texttt{nicola.bastianello.3@phd.unipd.it}
E. Dall'Anese is with the Department of Electrical, Computer and Energy Engineering, University of Colorado, Boulder, CO, USA; \texttt{emiliano.dallanese@colorado.edu}}
\thanks{This work was supported in part by the National Science Foundation CAREER award 1941896.} 
}

\maketitle

\begin{abstract}
This paper develops and analyzes  an online distributed proximal-gradient method (DPGM) for time-varying composite convex optimization problems. Each node of the network features a local cost that includes a smooth strongly convex function and a non-smooth convex  function, both changing over time. By  coordinating  through  a connected communication network, the nodes collaboratively track the trajectory of the minimizers without exchanging their local cost functions. The DPGM is implemented in an online fashion, that is, in a setting where only a limited number of steps are implemented before the function changes. Moreover, the algorithm is analyzed in an inexact scenario, that is, with a source of additive noise, that can represent \emph{e.g.} communication noise or quantization. It is shown that the tracking error of the online inexact DPGM is upper-bounded by a convergent linear system, guaranteeing convergence within a neighborhood of the optimal solution.
\end{abstract}

\section{Introduction and Motivation}\label{sec:motivation}


This paper considers a network of $N$ agents collaboratively solving a \textit{time-varying optimization problem} of the form:
\begin{equation}\label{eq:generic-problem}
\begin{split}
	\x^*(t_k) &= \argmin_{x  \in \R}  \sum_{i = 1}^{N} \left( f_i(x_i;t_k) + g_i(x_i;t_k) \right) \\
	&\text{s.t.} \quad x_i = x_j \quad \forall (i,j) \in \mathcal{E}
\end{split}
\end{equation}
where $f_i$ is a smooth, strongly convex function, $g_i$ is a convex non-smooth functions, and $\{ t_k \}_{k \in \N}$ is a time index. The $x_i \in \R$ are the local states of nodes in the network $\mathcal{G} = (\mathcal{V}, \mathcal{E})$.

Problem~\eqref{eq:generic-problem} is prevalent in learning and data processing problems over networks~\cite{simonetto2017decentralized,dixit2019online,dall2019optimization,zhang2019distributed,SimonettoGlobalsip2014};
temporal variations of the cost capture streams of data/measurements, with a new datum arriving at each interval $T_{\mathrm{s}} := t_{k+1} - t_k$, or time-varying problem parameters. Problem~\eqref{eq:generic-problem} can also model a number of data-driven control tasks, including measurement-based algorithms for network optimization~\cite{Bolognani_feedback_15,Bernstein2019feedback}, predictive control~\cite{paternain2019prediction}, and design of optimal controllers for distributed systems~\cite{dhingra2018proximal}; in this case, measurements are gathered from the physical system at every interval $T_{\mathrm{s}}$ and changes in the control objectives lead to a time-varying problem formulation.  

The goal of this paper is to develop a distributed algorithm that allows the nodes to collaboratively track the optimal trajectory $\{ x^*(t_k) \}$ of the sequence of composite problems~\eqref{eq:generic-problem}. Due to the dynamic nature of problem~\eqref{eq:generic-problem}, the proposed distributed proximal gradient method (DPGM) will be characterized by the application of a limited number of algorithmic steps to each problem, giving rise to an \emph{online} implementation of the algorithm.

The paper further studies DPGM in an \emph{inexact scenario}, characterized by the following nonidealities: \textit{(e1)} approximate  evaluation of the gradient of $f_i$; \textit{(e2)} approximate proximal evaluation; and, \textit{(e3)} state noise. Approximate gradient information naturally captures the case where, for example, bandit or zeroth order methods are utilized to estimate $\nabla_x f_i(x; t_k)$~\cite{Flaxman05,Hajinezhad19}.  An approximate proximal evaluation may emerge when the proximal operator can be performed only up to a given precision~\cite{schmidt2011convergence,salzo2012inexact,barre_principled_2020}. This may be due to nodes with limited  processing power or energy-related concerns. Finally, errors in the states (that is, variables) may be due to communications  or transmissions of quantized vectors, see \textit{e.g.}~\cite{kar_distributed_2009,majzoobi_analysis_2019,reisizadeh_exact_2019,magnusson_maintaining_2019}. The overall setting is stylized in Figure~\ref{fig:inexact-network}.

\begin{figure}[!t]
	\centering
	\includegraphics[width=0.40\textwidth]{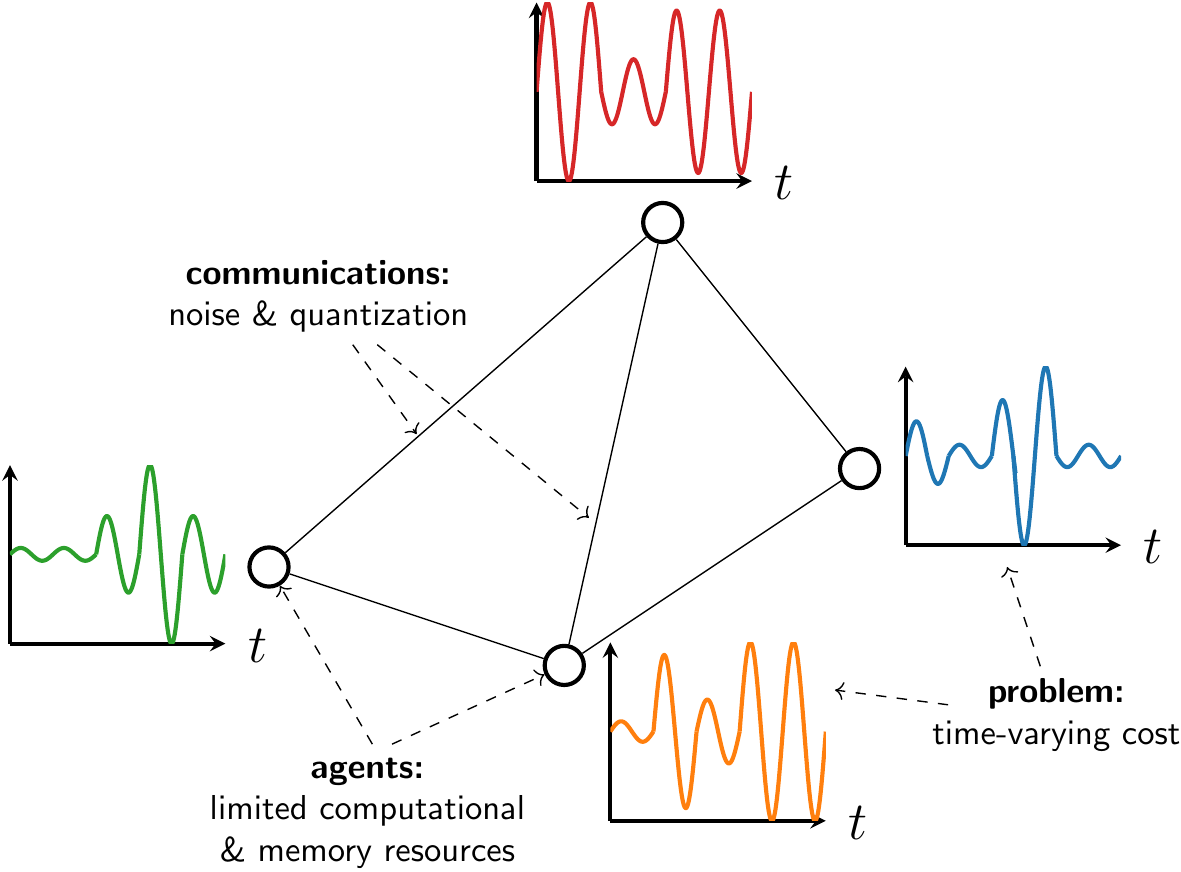}
	\caption{A qualitative illustration  of the distributed, inexact and time-varying framework considered in the paper. Each node is observing time-varying data (\textit{e.g.} measurements with a sensor) which imply that the problem is time-varying. Moreover, communication errors and the limited resources available at each node introduce inexactness in the algorithm's updates.}
	\vspace{-.4cm}
	\label{fig:inexact-network}
\end{figure}

The paper shows that the tracking error is upper-bounded by a convergent linear system; in particular, the iterates generated by the algorithm converge linearly to a neighborhood of the optimal solution trajectory $\{x^*(t_k)\}$.

Prior literature in the context of online distributed algorithms includes, \textit{e.g.}, \cite{zhang2019distributed,hosseini2016online,akbari2015distributed,shahrampour2017distributed,onlineSaddle}. Both \cite{hosseini2016online} and \cite{akbari2015distributed} consider an online sub-gradient framework, and perform a dynamic regret analysis. The recent paper \cite{yuan_can_2020} proposes an online version of distributed gradient descent. (DGD)\cite{yuan_convergence_2016} alongside an online gradient tracking scheme. These works do not address composite costs and involve exact updates. In \cite{shahrampour2017distributed} and  \cite{zhang2019distributed}, two (exact) distributed online algorithms are proposed to solve smooth  optimization problems, under the assumption that there exists a linear model for the optimal trajectory. In this paper, no model for the optimal trajectory is postulated. For smooth cost functions, decentralized online (and exact) prediction-correction schemes were developed in~\cite{simonetto2017decentralized,bastianello_distributed_2020}, an online exact saddle-point algorithm was developed in~\cite{onlineSaddle} for a consensus problem, and a distributed primal-dual algorithm with a star communication topology was developed in~\cite{Bernstein2019feedback}.

Different PGM-based algorithms have been proposed to solve distributed, composite problems in static optimization, for example \cite{chen_fast_2012,aybat_asynchronous_2015,shi_proximal_2015,zeng_fast_2017,li_decentralized_2019}. The approaches of \cite{chen_fast_2012,aybat_asynchronous_2015} require an inertial scheme and a diminishing step-size to guarantee exact convergence, respectively, while the methods proposed in \cite{shi_proximal_2015,zeng_fast_2017,li_decentralized_2019} are based on gradient tracking schemes, which lead to exact convergence with a fixed step-size. The recent papers \cite{alghunaim_decentralized_2019,xu_distributed_2020} propose two different unified frameworks for distributed PGMs based on gradient tracking schemes. Interestingly, linear convergence for this class of algorithms can be guaranteed in the static and exact scenario only provided that the non-smooth part of the cost be common to all nodes.

To the best of the authors' knowledge, the only online distributed proximal gradient method that can handle time-varying costs has been proposed in~\cite{dixit_online_2019}. The DP-OGD algorithm of \cite{dixit_online_2019} can be applied to $B$-connected graphs, but requires that the non-smooth part of the costs be common to all nodes. An interesting feature of DP-OGD in~\cite{dixit_online_2019} is that it alternates consensus steps (\textit{i.e.} rounds of communications) with proximal gradient steps. The algorithm guarantees a sub-linear dynamic regret, provided that the number of communication rounds and the step-size be chosen in a coordinated fashion.

\noindent \emph{Paper organization}.
Section~\ref{sec:time-invariant} introduces the inexact DPGM in a time-invariant scenario. Section~\ref{sec:time-varying} then presents an online version of the inexact DPGM, and studies its convergence. Section~\ref{sec:simulations} provides simulations results.

\noindent \emph{Notation}
$\| \cdot \|$ denotes the Euclidean norm, $x^\top$ denotes transposition, $\kron$ denotes the Kronecker product. Given a symmetric matrix $M$, $\lambda_\mathrm{min}(M)$ denotes the minimum eigenvalue of $M$; if, additionally, $M$ is stable, then $\rho(M) \in (0,1)$ denotes the absolute value of the largest singular value strictly smaller than one.  With $\partial f(x)$ is denoted the subdifferential of a convex function $f$ and by $\tilde{\nabla} f(x) \in \partial f(x)$ a subgradient. The proximal operator $\prox_{\alpha g}: \mathbb{R}^{n} \rightarrow  \mathbb{R}^{n}$ of a convex closed and proper function $g : \R^n \to \R$ is defined as:
$$
    \vspace{-.1cm} \prox_{\alpha g} ( y ) := \arg \min_{x} \left\{ g(x) + \frac{1}{2 \alpha} \|x - y\|^2 \right\},
$$
with $\alpha > 0$. The vectors of all ones and zeros are denoted by $\1$ and $\0$, respectively. In the following, local variables will be denoted by normal case letters, and global variables by boldface letters.

\noindent A sequence $\{ \beta^\ell \}_{\ell \in \N}$ is said to be \textit{R-linearly} convergent if there exist $C > 0$ and $\lambda \in (0,1)$ such that $\beta^\ell \leq C \lambda^\ell$ for any $\ell \in \N$.

\noindent Boldface vectors denote stacked local quantities, \emph{e.g.} $\x = [x_1, \ldots, x_N]^\top$, and $\mathcal{N}_i$ denotes the neighbors of node $i$.

\section{Inexact Composite Optimization}\label{sec:time-invariant}
This section introduces the proposed inexact DPGM for static problems, that is, for which the cost function in~\eqref{eq:generic-problem} does not change during the execution of the algorithm:
\begin{subequations}
\label{eq:time-invariant-problem}
\begin{align}
	 \x^* &= \argmin_{\{x_i \in \R\}_{i = 1}^N} \sum_{i = 1}^{N} \left( f_i(x_i) + g_i(x_i) \right) \label{eq:time-invariant-cost} \\
	&\text{s.t.} \quad x_i = x_j \quad \forall (i,j) \in \mathcal{E} \label{eq:consensus-constraints}
\end{align}
\end{subequations}
where by the consensus constraints~\eqref{eq:consensus-constraints} it holds that $\x^* = \1_N \kron x^*$, with $x^* := \argmin_{x \in \R} \sum_{i = 1}^{N} \left( f_i(x) + g_i(x) \right)$.

The following assumptions are used.

\begin{assumption}\label{as:graph}
The graph $\mathcal{G}$ is undirected and connected, and $\Wm$ is a doubly stochastic consensus matrix for the graph.
\end{assumption}

\begin{assumption}\label{as:local-costs}
The local costs are such that for any $i = 1, \ldots, N$:
\begin{itemize}\setlength\itemsep{0em}
	\item $f_i : \R \to \R$ is $L_{f_i}$-smooth and $m_{f_i}$-strongly convex;
	
	\item $g_i : \R \to \R$ is closed, convex and proper, and $L_{g_i}$-Lipschitz continuous, but possibly non-smooth.
\end{itemize}
In the following we use the notations $L_f = \max_i L_{f_i}$, $m_f = \min_i m_{f_i}$ and $L_g = \max_i L_{g_i}$.
\end{assumption}

\begin{remark}
For simplicity of exposition it is also assumed that the local states are scalar, $x_i \in \R$; the approach straightforwardly  extends to vectors in $\R^n$.
\end{remark}

The DPGM aims to identify the solution of~\eqref{eq:time-invariant-problem} by sequentially performing the following steps:  
\begin{subequations}\label{eq:time-invariant-DPGM}
\begin{align}
	\y^{\ell+1} &= \Wm \x^\ell - \alpha \nabla f(\x^\ell) \label{eq:DPGM-gradient-step} \\
	\x^{\ell+1} &= \prox_{\alpha g}(\y^{\ell+1}) + \e^\ell
\end{align}
\end{subequations}
where $f(\x) = \sum_{i = 1}^N f_i(x_i)$, $g(\x) = \sum_{i = 1}^N g_i(x_i)$, $\ell \in \N$ is the iteration index and  $\alpha > 0$ is the step size. The vector $\e^\ell$ represents random additive noise that satisfies the following assumption.

\begin{assumption}[Error]\label{as:stochastic-error}
The error $\e^\ell$ is the realization of a multi-variate random variable for which there exists $\eta \in [0, +\infty)$ such that $\expval{\norm{\e^\ell}} \leq \eta$.
\end{assumption}

The error can model for example: \textit{(e1)} approximate gradient evaluation; \textit{(e2)} approximate proximal evaluation; and, \textit{(e3)} state noise. Assumption~\ref{as:stochastic-error} is verified by random vectors that have finite mean and covariance, as shown in the following lemma.

\begin{lemma}[Expectation of error norm]\label{lem:expectation_norm}
Let $\e$ be a random vector with finite mean $\bm{\mu}$ and finite covariance matrix $\bm{\Sigma}$. Then, one has that
\begin{align}
	\expval{\norm{\e}} \leq \eta := \sqrt{\operatorname{tr}(\bm{\Sigma}) + \norm{\bm{\mu}}^2} < +\infty.
\end{align}
\end{lemma}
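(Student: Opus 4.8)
The plan is to bound $\expval{\norm{\e}}$ via Jensen's inequality, reducing the problem to computing $\expval{\norm{\e}^2}$, which is a standard second-moment identity. First I would invoke concavity of the square-root function together with Jensen's inequality to write $\expval{\norm{\e}} = \expval{\sqrt{\norm{\e}^2}} \leq \sqrt{\expval{\norm{\e}^2}}$; this is the one genuine inequality in the argument, and it is where all the slack in the bound $\eta$ comes from.

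Next I would evaluate $\expval{\norm{\e}^2}$ exactly. Writing $\e = \bm{\mu} + (\e - \bm{\mu})$ and expanding, $\norm{\e}^2 = \norm{\bm{\mu}}^2 + 2 \bm{\mu}^\top (\e - \bm{\mu}) + \norm{\e - \bm{\mu}}^2$. Taking expectations, the cross term vanishes since $\expval{\e - \bm{\mu}} = \0$, leaving $\expval{\norm{\e}^2} = \norm{\bm{\mu}}^2 + \expval{\norm{\e - \bm{\mu}}^2}$. The remaining term is the standard trace identity for the covariance: $\expval{\norm{\e - \bm{\mu}}^2} = \expval{\operatorname{tr}\big((\e - \bm{\mu})(\e - \bm{\mu})^\top\big)} = \operatorname{tr}\big(\expval{(\e - \bm{\mu})(\e - \bm{\mu})^\top}\big) = \operatorname{tr}(\bm{\Sigma})$, using linearity of trace and expectation. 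Combining, $\expval{\norm{\e}^2} = \operatorname{tr}(\bm{\Sigma}) + \norm{\bm{\mu}}^2$, which is finite by hypothesis, and substituting back into the Jensen bound gives $\expval{\norm{\e}} \leq \sqrt{\operatorname{tr}(\bm{\Sigma}) + \norm{\bm{\mu}}^2} =: \eta < +\infty$.

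There is no real obstacle here — the statement is essentially a textbook fact — but if I had to name the step requiring the most care it would be justifying the interchange of expectation and trace (equivalently, of expectation and the finite sum over matrix entries), which is legitimate precisely because $\bm{\Sigma}$ has finite entries, i.e. each coordinate of $\e$ has finite variance. Everything else is routine algebra, so I would keep the write-up to these three lines: Jensen, the bias–variance expansion, and the trace identity.
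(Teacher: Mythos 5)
Your proposal is correct and follows essentially the same argument as the paper: both reduce the problem to the identity $\expval{\norm{\e}^2} = \operatorname{tr}(\bm{\Sigma}) + \norm{\bm{\mu}}^2$ (the paper obtains it by expanding the trace of the covariance directly, you by the bias--variance split, which is the same computation) and then apply Jensen's inequality with the concavity of $\sqrt{\cdot}$. No gaps.
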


\noindent \emph{Proof}. \ifarxiv See Appendix~\ref{app:lemma-proofs}. \else See Appendix~I in \cite{bastianello_distributed_2020_2}. \fi

The additive error $\e^\ell$ is not the only source of inexactness for~\eqref{eq:time-invariant-DPGM}. Indeed, similarly to DGD~\cite{yuan_convergence_2016}, algorithm~\eqref{eq:time-invariant-DPGM} can identify a solution of~\eqref{eq:time-invariant-problem} only up to a precision error. This is due to the fact that DPGM is actually solving the following relaxed version of~\eqref{eq:time-invariant-problem}
\begin{equation}\label{eq:regularized-problem}
	\tilde{\x} := \argmin_{\x \in \R^{N}} \left\{ \varphi_\alpha(\x) + \alpha g(\x) \right\}
\end{equation}
where $\varphi_\alpha(\x) := (1/2) \x^\top (\Im - \Wm) \x + \alpha f(\x)$ which relaxes the consensus constraints~\eqref{eq:consensus-constraints} using the quadratic function $(1/2) \x^\top (\Im - \Wm) \x$. Notice that in general, $\tilde{\x}$ does not belong to the consensus subspace $\operatorname{span}\{ \1 \}$. It is now possibly to see that DPGM is a proximal gradient method with unitary step-size applied to~\eqref{eq:regularized-problem}.

\begin{lemma}[Relaxed problem]\label{lem:properties-gamma}
The function $\varphi_\alpha : \R^{N} \to \R$ is $L_\varphi$-smooth and $m_\varphi$-strongly convex, with
$$
	L_\varphi = 1 - \lambda_\mathrm{min}(\Wm) + \alpha L_f \quad \text{and} \quad m_\varphi = \alpha m_f.
$$
\end{lemma}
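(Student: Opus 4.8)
The plan is to compute $\nabla\varphi_\alpha$ explicitly and bound its variation for the smoothness claim, and to verify strong convexity by showing that a quadratic remainder is convex; this way we never need $f$ to be twice differentiable. First I would collect the structural facts about $\Wm$: under Assumption~\ref{as:graph}, $\Wm$ is symmetric and doubly stochastic, so its eigenvalues are real, lie in $[\lambda_\mathrm{min}(\Wm),1]$, and the value $1$ is attained (with eigenvector $\1$). Hence $\Im - \Wm$ is symmetric positive semidefinite, with $\lambda_\mathrm{max}(\Im - \Wm) = 1 - \lambda_\mathrm{min}(\Wm)$ and $\lambda_\mathrm{min}(\Im-\Wm)=0$; in particular $\norm{\Im - \Wm} = 1 - \lambda_\mathrm{min}(\Wm)$ as an operator norm. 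I would also note that, since $f(\x) = \sum_i f_i(x_i)$ is separable, Assumption~\ref{as:local-costs} gives $\norm{\nabla f(\x) - \nabla f(\y)}^2 = \sum_i |f_i'(x_i) - f_i'(y_i)|^2 \le \sum_i L_{f_i}^2 (x_i-y_i)^2 \le L_f^2 \norm{\x - \y}^2$, i.e. $\nabla f$ is $L_f$-Lipschitz, and that each map $x_i \mapsto f_i(x_i) - (m_f/2) x_i^2$ is convex because $m_{f_i} \ge m_f$.

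For the smoothness bound, I would write $\nabla\varphi_\alpha(\x) = (\Im - \Wm)\x + \alpha \nabla f(\x)$ and apply the triangle inequality,
$$
\norm{\nabla\varphi_\alpha(\x) - \nabla\varphi_\alpha(\y)} \le \norm{\Im - \Wm}\,\norm{\x - \y} + \alpha\,\norm{\nabla f(\x) - \nabla f(\y)} \le \big( 1 - \lambda_\mathrm{min}(\Wm) + \alpha L_f \big)\norm{\x - \y},
$$
which is exactly the claimed $L_\varphi$.

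For strong convexity, I would exhibit
$$
\varphi_\alpha(\x) - \frac{\alpha m_f}{2}\norm{\x}^2 = \frac12 \x^\top(\Im - \Wm)\x + \alpha \sum_{i=1}^N \Big( f_i(x_i) - \frac{m_f}{2} x_i^2 \Big)
$$
as a sum of convex functions: the quadratic form is convex since $\Im - \Wm \succeq 0$, and each summand of the second term is convex by the observation above. Therefore $\varphi_\alpha$ is $\alpha m_f$-strongly convex, i.e. $m_\varphi = \alpha m_f$.

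The only genuinely delicate point is the pair of spectral facts $\Im - \Wm \succeq 0$ and $\lambda_\mathrm{max}(\Im - \Wm) = 1 - \lambda_\mathrm{min}(\Wm)$; these rest on $\Wm$ being symmetric with spectrum in $[-1,1]$ and $1$ an eigenvalue — standard for a symmetric doubly stochastic consensus matrix on a connected graph — but they are what make both bounds tight. Everything else reduces to the triangle inequality and the separable structure of $f$.
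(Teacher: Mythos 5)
Your proposal is correct and follows essentially the same route as the paper's proof: the smoothness constant comes from the triangle inequality together with $\norm{\Im - \Wm} = 1 - \lambda_\mathrm{min}(\Wm)$ and the $L_f$-Lipschitz continuity of $\nabla f$, while strong convexity comes from the positive semidefiniteness of $\Im - \Wm$ combined with the $m_f$-strong convexity of $f$. Your version merely spells out the spectral facts and the separable-convexity decomposition that the paper leaves implicit.
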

\emph{Proof.} \ifarxiv See Appendix~\ref{app:lemma-proofs}. \else See Appendix~I in \cite{bastianello_distributed_2020_2}. \fi

In the absence of additive noise, the convergence of the algorithm to $\tilde{\x}$ is guaranteed imposing the following condition on the step-size:
\begin{equation}
\label{eq:alpha-relaxed-static}
	\alpha \in \left( 0, \frac{1 + \lambda_\mathrm{min}(\Wm)}{L_f} \right)
\end{equation}
which readily follows from the condition $1 \leq 2 / L_\varphi$. If $\alpha$ satisfies~\eqref{eq:alpha-relaxed-static}, then the algorithm converges Q-linearly to the solution of the regularized problem~\eqref{eq:regularized-problem}; that is, \cite{taylor_exact_2018}
\begin{equation}\label{eq:linear-convergence-pgm}
	\norm{\x^{\ell+1} - \tilde{\x}} \leq \zeta_\varphi \norm{\x^\ell - \tilde{\x}}
\end{equation}
where $\zeta_\varphi := \max\left\{ |1 - L_\varphi|, |1 - m_\varphi| \right\} \in (0,1)$. Obviously, $\norm{\x^{\ell} - \tilde{\x}} \rightarrow 0$ as $\ell \rightarrow + \infty$.

\subsection{Convergence analysis}
\label{sec:convergence_static}

The result of this section establishes the convergence of the inexact DPGM  to a neighborhood of the optimal solution $\x^*$ of problem~\eqref{eq:time-invariant-problem}. The size of the neighborhood will be shown to depend both on the inexactness introduced by update~\eqref{eq:time-invariant-DPGM} and the structure of the approximate problem~\eqref{eq:regularized-problem}.

In the following, the average of the local variables $\{x_i^\ell\}_{i = 1}^N$ at iteration $\ell$ is denoted by $\bar{x}^\ell := (1/N) \sum_{i = 1}^N x_i^\ell$, and let $\bar{\x}^\ell :=  \1_N \kron \bar{x}^\ell$. For future developments, notice that $\bar{\x}^\ell$ can also be written as $\bar{\x}^\ell = \frac{1}{N} \1_N \1_N^\top \x^\ell$. The following convergence result is related to the evolution of the error vector 
\begin{equation}
d^\ell := \Big[ \norm{\bar{\x}^\ell - \x^*}, \norm{\x^\ell - \bar{\x}^\ell}, \norm{\x^\ell - \tilde{\x}} \Big]^\top.
\end{equation}

\begin{proposition}[Time-invariant convergence]\label{pr:time-invariant-convergence}
Let Assumptions~\ref{as:graph}--\ref{as:stochastic-error} hold, and let the step size $\alpha$ verify
\begin{equation}\label{eq:step-size-condition}
	0 < \alpha < \min\left\{ \frac{1 + \lambda_\mathrm{min}(\Wm)}{L_f}, \frac{2}{L_f + m_f} \right\};
\end{equation}
then algorithm~\eqref{eq:time-invariant-DPGM} converges R-linearly to a neighborhood of the optimal solution $\x^*$. 

\noindent In particular, the dynamics of the mean error $\expval{\norm{\x^{\ell+1} - \x^*}}$ can be bounded as follows:
\begin{align}
	&\expval{d^{\ell+1}} \leq A \expval{d^\ell} + b + \1_3 \expval{\norm{\e^\ell}} \label{eq:linear-error-system} \\
	&\expval{\norm{\x^{\ell+1} - \x^*}} \leq \begin{bmatrix} 1 & 1 & 0 \end{bmatrix} \expval{d^{\ell+1}},
\end{align}
where
$$
    A := \begin{bmatrix} c & \alpha L_f & 0 \\ 0 & \rho(\Wm) & \alpha L_f \\ 0 & 0 & \zeta_\varphi \end{bmatrix}, \quad
    b := \begin{bmatrix} 2 \alpha L_g \\ 2 \alpha L_g + \norm{(\Im - \Wm)\tilde{\x}} \\ 0 \end{bmatrix}
$$
with $c := \sqrt{1 - 2\alpha m_f L_f/(m_f + L_f)} \in (0,1)$, and the inequality holds entry-wise.
\end{proposition}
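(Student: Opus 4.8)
The plan is to establish the entrywise inequality~\eqref{eq:linear-error-system} by bounding the three coordinates of $d^{\ell+1}$ separately, and then to note that $A$ is Schur stable — it is upper triangular with diagonal entries $c,\rho(\Wm),\zeta_\varphi$, all in $(0,1)$ under~\eqref{eq:step-size-condition} — so that unrolling~\eqref{eq:linear-error-system} gives $\expval{d^\ell}\le A^\ell d^0+(\Im-A)^{-1}(b+\1_3\eta)$ with $A^\ell d^0\to\0$ R-linearly; the displayed bound on $\expval{\norm{\x^{\ell+1}-\x^*}}$ is then just the triangle inequality $\norm{\x^{\ell+1}-\x^*}\le\norm{\bar\x^{\ell+1}-\x^*}+\norm{\x^{\ell+1}-\bar\x^{\ell+1}}$ followed by taking expectations. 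The ingredients I would use throughout are: (i) $\prox_{\alpha g}(\y^{\ell+1})=\y^{\ell+1}-\alpha v^{\ell+1}$ with $v^{\ell+1}\in\partial g(\prox_{\alpha g}(\y^{\ell+1}))$ and $\norm{v^{\ell+1}}\le L_g$ (Assumption~\ref{as:local-costs}); (ii) the fixed-point identity $\tilde\x=\prox_{\alpha g}(\Wm\tilde\x-\alpha\nabla f(\tilde\x))$, equivalently $(\Im-\Wm)\tilde\x+\alpha\nabla f(\tilde\x)+\alpha\tilde v=\0$ for some $\tilde v\in\partial g(\tilde\x)$ with $\norm{\tilde v}\le L_g$; (iii) first-order optimality of $x^*$, i.e.\ $\1^\top(\nabla f(\x^*)+v^*)=0$ for some $v^*\in\partial g(\x^*)$; (iv) the relations $\Wm P=P\Wm=P$ and $(\Im-P)(\Im-\Wm)=\Im-\Wm$ for $P:=\tfrac1N\1\1^\top$; and (v) $\expval{\norm{\e^\ell}}\le\eta$ (Assumption~\ref{as:stochastic-error}).

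\emph{Third and second coordinates.} For the third, from $\x^{\ell+1}=\prox_{\alpha g}(\y^{\ell+1})+\e^\ell$, the triangle inequality, and the Q-linear contraction~\eqref{eq:linear-convergence-pgm} of the noiseless step toward $\tilde\x$, one gets at once $\norm{\x^{\ell+1}-\tilde\x}\le\zeta_\varphi\norm{\x^\ell-\tilde\x}+\norm{\e^\ell}$. For the disagreement coordinate, note $\x^\ell-\bar\x^\ell=(\Im-P)\x^\ell$ and $\norm{\Im-P}\le1$; substituting~\eqref{eq:time-invariant-DPGM} with (i) and applying $\Im-P$ gives $(\Im-P)\x^{\ell+1}=(\Im-P)\Wm\x^\ell-\alpha(\Im-P)\nabla f(\x^\ell)-\alpha(\Im-P)v^{\ell+1}+(\Im-P)\e^\ell$. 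By (iv), $(\Im-P)\Wm\x^\ell=(\Im-P)\Wm(\Im-P)\x^\ell$, whose norm is at most $\rho(\Wm)\norm{\x^\ell-\bar\x^\ell}$. Splitting $\nabla f(\x^\ell)=\nabla f(\tilde\x)+[\nabla f(\x^\ell)-\nabla f(\tilde\x)]$, $L_f$-smoothness bounds $\alpha\norm{(\Im-P)[\nabla f(\x^\ell)-\nabla f(\tilde\x)]}\le\alpha L_f\norm{\x^\ell-\tilde\x}$, while (ii) and (iv) give $\alpha(\Im-P)\nabla f(\tilde\x)=-(\Im-\Wm)\tilde\x-\alpha(\Im-P)\tilde v$, of norm at most $\norm{(\Im-\Wm)\tilde\x}+\alpha L_g$. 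With $\alpha\norm{(\Im-P)v^{\ell+1}}\le\alpha L_g$ and $\norm{(\Im-P)\e^\ell}\le\norm{\e^\ell}$, the second row follows.

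\emph{First coordinate.} Averaging~\eqref{eq:time-invariant-DPGM} (using $\1^\top\Wm=\1^\top$ and (i)) gives $\bar\x^{\ell+1}=\bar\x^\ell-\tfrac\alpha N\1\1^\top(\nabla f(\x^\ell)+v^{\ell+1})+\tfrac1N\1\1^\top\e^\ell$; subtracting (iii) and splitting $\nabla f(\x^\ell)-\nabla f(\x^*)=[\nabla f(\x^\ell)-\nabla f(\bar\x^\ell)]+[\nabla f(\bar\x^\ell)-\nabla f(\x^*)]$, the first bracket is controlled by $L_f$-smoothness of the $f_i$ and Cauchy--Schwarz, producing the off-diagonal term $\alpha L_f\norm{\x^\ell-\bar\x^\ell}$, whereas $(\bar\x^\ell-\x^*)-\tfrac\alpha N\1\1^\top[\nabla f(\bar\x^\ell)-\nabla f(\x^*)]$ equals $\1$ times one centralized gradient step on the average $h:=\tfrac1N\sum_i f_i$ — strongly convex with constant $\ge m_f$ and smooth with constant $\le L_f$ — which under $\alpha<2/(L_f+m_f)$ contracts $\norm{\bar\x^\ell-\x^*}$ by the factor $c$ (in the scalar setting of the paper the contraction factor $|1-\alpha h''(\xi)|$ with $h''(\xi)\in[m_f,L_f]$ is at most $1-\alpha m_f\le c$; in $\R^n$ one uses the descent lemma for strongly convex smooth functions). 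The subgradient difference contributes at most $2\alpha L_g$ and the noise $\norm{\e^\ell}$, which completes~\eqref{eq:linear-error-system}; taking expectations and applying (v) yields the two displayed bounds.

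\emph{Main obstacle.} The delicate step is the disagreement recursion: unlike for smooth DGD, $(\Im-P)\nabla f(\x^\ell)$ does not vanish, and the point is to route it through the fixed-point identity of the relaxed solution $\tilde\x$ so that it produces the constant $\norm{(\Im-\Wm)\tilde\x}$ — itself $O(\alpha)$, since $(\Im-\Wm)\tilde\x=-\alpha(\nabla f(\tilde\x)+\tilde v)$ — rather than a term that is not contracted. Secondary care is needed to verify that $c$ is real and in $(0,1)$ under~\eqref{eq:step-size-condition} (this reduces to $(m_f-L_f)^2\ge0$ together with $\alpha<2/(m_f+L_f)$), and to bookkeep the scalar constants, including the factors of $N$, so that all cross terms land exactly in the entries of $A$ and $b$ as stated.
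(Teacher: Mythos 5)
Your proposal follows essentially the same route as the paper's proof: the same three per-coordinate recursions (the disagreement bound via the projection $\Im - \tfrac{1}{N}\1\1^\top$, the $\rho(\Wm)$ factor, and the fixed-point identity of the relaxed solution $\tilde{\x}$ producing $\norm{(\Im-\Wm)\tilde{\x}}+2\alpha L_g$; the averaged update combined with optimality of $\x^*$ for the first coordinate; the $\zeta_\varphi$-contraction of the proximal-gradient step on the relaxed problem for the third), with the noise entering each row as $\norm{\e^\ell}$ and the final bound by the triangle inequality. The only deviation is how the factor $c$ is obtained: the paper applies the co-coercivity inequality for $m_f$-strongly convex, $L_f$-smooth functions (Nesterov, Thm.~2.1.12) to get exactly $c=\sqrt{1-2\alpha m_f L_f/(m_f+L_f)}$, whereas your mean-value argument via $h''$ tacitly assumes twice differentiability (not granted by Assumption~\ref{as:local-costs}); your fallback bound $\max\{|1-\alpha m_f|,|1-\alpha L_f|\}\le 1-\alpha m_f\le c$ does hold under $\alpha<2/(m_f+L_f)$, but it should be justified by that same co-coercivity estimate (or the standard contraction property of the gradient map) rather than the descent lemma.
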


Convergence to a neighborhood of $\x^*$ follows by noticing that all the eigenvalues of the matrix $A$ are strictly inside the unitary circle; that is, the sequence of errors is upper-bounded by a convergent linear system with a fixed input.

The proof of Proposition~\ref{pr:time-invariant-convergence} is reported in \ifarxiv Appendix~\ref{app:ti-convergence}, \else Appendix~II of \cite{bastianello_distributed_2020_2}, \fi alongside some auxiliary Lemmas.

\section{Online Composite Optimization}\label{sec:time-varying}
The paper now turns to the \emph{time-varying} problem~\eqref{eq:generic-problem}, under the following assumption.

\begin{assumption}\label{as:time-varying}
The sequence of problems~\eqref{eq:generic-problem} is defined over a fixed graph $\mathcal{G}$ that satisfies Assumption~\ref{as:graph}. Moreover, at each time $\{ t_k \}_{k \in \N}$ the local costs $f_i(x;t_k)$ and $g_i(x;t_k)$ satisfy Assumption~\ref{as:local-costs}.
\end{assumption}

Assume that, because of underlying communication and computation bottlenecks, a limited number of iterations and communication rounds can be performed over an interval $T_{\mathrm{s}}$; hence, each problem  can be solved only approximately (representing an additional source of inexactness for the proposed algorithm). Denote as $M_{\mathrm{o}} > 0$ the number of algorithmic steps. The \emph{online inexact DPGM} is then described by Algorithm~\ref{alg:time-varying-DPGM}.

\begin{algorithm}[!ht]
\caption{Online inexact DPGM}
\label{alg:time-varying-DPGM}
\begin{algorithmic}[1]
	\Require $x_i(t_0)$, $i = 1,2,\ldots,N$,  $\alpha$, consensus matrix $\Wm$.
	\For{$k = 1, 2, \ldots$, each node}
		\Statex\hspace*{\algorithmicindent}{\color{blue}// Observe new problem}
		\State Observe $f_i(\cdot;t_k)$ and $g_i(\cdot;t_k)$
		\Statex\hspace*{\algorithmicindent}{\color{blue}// Apply solver}
		\State Set $x_i^0 = x_i(t_{k-1})$
		\For{$\ell=0,1,\ldots, M_{\mathrm{o}}-1$ each agent $i$}
			\Statex\hspace*{3em}{\color{blue}// Communication}
			\State Transmit $x_i^\ell$ to  neighbors $\mathcal{N}_i$ 
			\State Receive $\hat{x}_j^\ell$ from  neighbors $\mathcal{N}_i$ \Comment{comm. noise}
			\Statex\hspace*{3em}{\color{blue}// Proximal gradient step}
			\State Compute $\hat{\nabla} f_i(x_i^\ell; t_k)$ \Comment{inexact gradient}
			\State Compute the steps: \Comment{inexact proximal}
			\begin{align*}
				y_i^{\ell+1} &= \sum_{j \in \mathcal{N}_i } w_{ij} \hat{x}_j^\ell + w_{ii} x_i^\ell - \alpha \hat{\nabla} f_i(x_i^\ell; t_k)\\
				x_i^{\ell+1} & \approx \prox_{\alpha g_i(\cdot; t_k)}(y_i^{\ell+1})
			\end{align*}
		\EndFor
		\State set $x_i(t_k) = x_i^{M_{\mathrm{o}}}$
	\EndFor
\end{algorithmic}
\end{algorithm}

Notice that in Algorithm~\ref{alg:time-varying-DPGM} a second set of local states, $x_i(t_k)$, $i = 1, \ldots, N$, was introduced. These represent the approximate solution computed by each node after applying $M_{\mathrm{o}}$ steps of the inexact DPGM to the cost observed at time $t_k$. Further, Algorithm~\ref{alg:time-varying-DPGM} is general enough to cover the cases where the functional form of $f_i(\cdot;t_k)$ can be observed, or when only $\hat{\nabla} f_i(x_i^\ell)$ is available.  Moreover, the approximate solution to the problem at time $t_{k-1}$ is used to warm-start\footnote{That is, the initial condition for DPGM at time $t_k$ is chosen equal to the output of DPGM applied to the previous problem at time $t_{k-1}$.} the DPGM applied to problem at time $t_k$.

The sequence $\{\x^*(t_k)\}_{k \in \N}$ represents the unique optimal \emph{trajectory} of~\eqref{eq:generic-problem}. The key question posed here pertains to the ability of the online inexact DPGM to \emph{track} $\{\x^*(t_k)\}_{k \in \N}$, which will be the focus of the following two sections. Notice that in Algorithm~\ref{alg:time-varying-DPGM} the different sources of inexactness (on communication, gradient and proximal) are spelled out; depending on the application, only some of them may be present. Remark~\ref{rem:sources-inexactness} will discuss the effect of these sources on the convergence.

The following section analyzes the convergence of Algorithm~\ref{alg:time-varying-DPGM}, and see \ifarxiv Appendix~\ref{app:time-varying} \else Appendix~III of \cite{bastianello_distributed_2020_2} \fi for the proofs.

\subsection{Convergence analysis}

The temporal  variability  of  the  problem~\eqref{eq:generic-problem}  could  be  measured  based  on ``how fast''  $\x^*(t_k)$ varies~\cite{dall2019optimization,dixit2019online}; more precisely, since $\x^* (t_k)$ is finite and unique (by Assumption~\ref{as:local-costs}), a pertinent measure can be $\norm{\x^*(t_{k+1}) - \x^*(t_k)}$. Accordingly, the following assumption is introduced.

\begin{assumption}\label{as:distance-optima}
Assume that there exists a non-negative scalar $\sigma < +\infty$ such that, at any time $t_k$, $k \in \N$, one has that
\begin{subequations}\label{eq:sigma}
\begin{align}
	\norm{\x^*(t_{k+1}) - \x^*(t_k)}, \ \norm{\tilde{\x}(t_{k+1}) - \tilde{\x}(t_k)} \leq \sigma
\end{align}
\end{subequations}
where $\tilde{\x}(t_k) := \argmin_{\x \in \R^{n N}} \left\{ \varphi_\alpha(\x;t_k) + \alpha g(\x;t_k) \right\}$ and $\varphi_\alpha(\x;t_k) := (1/2) \x^\top (\Im - \Wm) \x + \alpha f(\x;t_k)$.
\end{assumption}

Although each  problem observed at time $t_k$ is solved only approximately (because of the limited number of steps -- $M_\mathrm{o}$ -- applied within an interval $[t_k,t_{k+1})$) and using inexact steps for the DPGM, the following proposition will show that the sequence of the errors  
\begin{align*}
d(t_k) := [ & \norm{\bar{\x}(t_k) - \x^*(t_k)}, \\ & \quad \norm{\x(t_k) - \bar{\x}(t_k)}, \norm{\x(t_k) - \tilde{\x}(t_k)} ]^\top
\end{align*}
does not grow unbounded, but that Algorithm~\ref{alg:time-varying-DPGM} converges to a neighborhood of the optimal trajectory.

\begin{proposition}[Time-varying convergence]\label{pr:time-varying-convergence}
Let Assumptions~\ref{as:time-varying},~\ref{as:distance-optima} hold, and suppose that the step size $\alpha$ satisfies~\eqref{eq:step-size-condition}. Then, Algorithm~\ref{alg:time-varying-DPGM} converges R-linearly to a neighborhood of the optimal solution.

In particular, the mean distance from the optimal trajectory -- $\expval{\norm{\x(t_k) - \x^*(t_k)}}$ -- can be bounded using the following convergent linear system:
\begin{equation}\label{eq:time-varying-error-system}
\begin{split}
	&\expval{d(t_{k+1})} \leq A^{M_{\mathrm{o}}} \expval{d(t_k)} + b'' \\
	&\expval{\norm{\x(t_{k+1}) - \x^*(t_{k+1})}} \leq \begin{bmatrix} 1 & 1 & 0 \end{bmatrix} \expval{d(t_{k+1})}
\end{split}
\end{equation}
where
$$
	b'' := \sum_{\ell = 0}^{M_{\mathrm{o}}-1} A^{M_{\mathrm{o}} - \ell - 1} \Big( \begin{bmatrix} 2\alpha L_g \\ 2\alpha L_g + \sigma' \\ 0 \end{bmatrix} + \eta \1_3 \Big) +  A^{M_{\mathrm{o}}} \begin{bmatrix} \sigma \\ 0 \\ \sigma \end{bmatrix}
$$
and $\sigma' := \sup_{k \in \N} \norm{(\Im - \Wm) \tilde{\x}(t_k)}$.
\end{proposition}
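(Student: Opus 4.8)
The plan is to reduce Proposition~\ref{pr:time-varying-convergence} to Proposition~\ref{pr:time-invariant-convergence} by (i) unrolling the per-iteration bound over the $M_{\mathrm{o}}$ inner steps taken at a fixed time $t_k$, and (ii) chaining consecutive intervals through the warm-start. Fix $t_k$ and run the inner loop of Algorithm~\ref{alg:time-varying-DPGM} for $\ell = 0,\ldots,M_{\mathrm{o}}-1$. By Assumption~\ref{as:time-varying} the data $f_i(\cdot;t_k)$, $g_i(\cdot;t_k)$ satisfy Assumption~\ref{as:local-costs} and the graph is fixed, so the one-step inequality \eqref{eq:linear-error-system} behind Proposition~\ref{pr:time-invariant-convergence} applies verbatim to the interior iterates, with $\x^*,\tilde\x$ replaced by $\x^*(t_k),\tilde\x(t_k)$ and with $\norm{(\Im-\Wm)\tilde\x}$ replaced by its uniform bound $\sigma'$. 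Writing $d^\ell(t_k)$ for the error vector of $\x^\ell$ at stage $t_k$, using $\expval{\norm{\e^\ell}}\leq\eta$ (Assumption~\ref{as:stochastic-error}) and the entrywise nonnegativity of $A$ (so the inequality is preserved under left-multiplication), iterating $M_{\mathrm{o}}$ times gives
$$\expval{d^{M_{\mathrm{o}}}(t_k)} \leq A^{M_{\mathrm{o}}}\,\expval{d^0(t_k)} + \sum_{\ell=0}^{M_{\mathrm{o}}-1} A^{M_{\mathrm{o}}-\ell-1}\Big( \begin{bmatrix} 2\alpha L_g \\ 2\alpha L_g + \sigma' \\ 0 \end{bmatrix} + \eta\,\1_3 \Big).$$

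Next I would connect $d^0(t_k)$ to $d(t_{k-1})$ via the warm start $\x^0(t_k)=\x(t_{k-1})$. Since averaging is linear, $\bar\x^0(t_k)=\bar\x(t_{k-1})$, so the consensus-violation component is unchanged, $\norm{\x^0(t_k)-\bar\x^0(t_k)}=\norm{\x(t_{k-1})-\bar\x(t_{k-1})}$, while the triangle inequality and Assumption~\ref{as:distance-optima} give
$$\norm{\bar\x(t_{k-1}) - \x^*(t_k)} \leq \norm{\bar\x(t_{k-1}) - \x^*(t_{k-1})} + \sigma, \qquad \norm{\x(t_{k-1}) - \tilde\x(t_k)} \leq \norm{\x(t_{k-1}) - \tilde\x(t_{k-1})} + \sigma .$$
Hence $\expval{d^0(t_k)} \leq \expval{d(t_{k-1})} + \begin{bmatrix}\sigma & 0 & \sigma\end{bmatrix}^\top$. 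Substituting this into the unrolled bound, noting $\x(t_k)=\x^{M_{\mathrm{o}}}$ so that $d(t_k)=d^{M_{\mathrm{o}}}(t_k)$, and shifting $k\mapsto k+1$ reproduces exactly \eqref{eq:time-varying-error-system} with the stated $b''$ (the $A^{M_{\mathrm{o}}}\begin{bmatrix}\sigma & 0 & \sigma\end{bmatrix}^\top$ term coming from the warm-start mismatch). The output inequality $\expval{\norm{\x(t_{k+1})-\x^*(t_{k+1})}}\leq \begin{bmatrix}1 & 1 & 0\end{bmatrix}\expval{d(t_{k+1})}$ follows as in Proposition~\ref{pr:time-invariant-convergence} from $\norm{\x-\x^*}\leq\norm{\x-\bar\x}+\norm{\bar\x-\x^*}$.

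For the convergence claim, I would invoke that $A$ is entrywise nonnegative with $\rho(A)<1$ under \eqref{eq:step-size-condition} (shown in Proposition~\ref{pr:time-invariant-convergence}); therefore $A^{M_{\mathrm{o}}}$ is Schur stable, $\rho(A^{M_{\mathrm{o}}})=\rho(A)^{M_{\mathrm{o}}}<1$, and $(\Im-A^{M_{\mathrm{o}}})^{-1}=\sum_{j\geq 0}(A^{M_{\mathrm{o}}})^j\geq \0$ entrywise. Thus \eqref{eq:time-varying-error-system} is a stable linear recursion with constant input $b''$, whose solution obeys $\expval{d(t_k)}\leq (A^{M_{\mathrm{o}}})^{k}\expval{d(t_0)} + (\Im-A^{M_{\mathrm{o}}})^{-1}b''$, yielding R-linear convergence of $\expval{\norm{\x(t_k)-\x^*(t_k)}}$ to the neighborhood of radius $\begin{bmatrix}1 & 1 & 0\end{bmatrix}(\Im-A^{M_{\mathrm{o}}})^{-1}b''$.

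The main obstacle I anticipate is the bookkeeping in the first step: verifying that the per-iteration contraction factors $c$ (gradient step on the strongly convex average), $\rho(\Wm)$ (mixing on the disagreement) and $\zeta_\varphi$ (proximal-gradient contraction toward $\tilde\x(t_k)$), as well as the cross terms $\alpha L_f$ and the constant terms, carry over uniformly in $k$ — which is precisely where $L_f=\max_i L_{f_i}$, $m_f=\min_i m_{f_i}$, $L_g=\max_i L_{g_i}$ under Assumption~\ref{as:time-varying} and the definition of $\sigma'$ as a supremum over $k$ are needed. Once the uniformity of \eqref{eq:linear-error-system} across time indices is granted, the rest is a direct unrolling plus the two triangle inequalities above.
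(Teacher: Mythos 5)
Your proposal is correct and follows essentially the same route as the paper: unroll the one-step bound of Proposition~\ref{pr:time-invariant-convergence} over the $M_{\mathrm{o}}$ inner iterations (with $\norm{(\Im-\Wm)\tilde{\x}(t_k)}\leq\sigma'$ and $\expval{\norm{\e^\ell}}\leq\eta$), and then control the warm-started initial error via the triangle inequality and Assumption~\ref{as:distance-optima}, yielding exactly the $A^{M_{\mathrm{o}}}[\sigma,\,0,\,\sigma]^\top$ contribution to $b''$. Your added remarks on the entrywise nonnegativity of $A$ and the Schur stability of $A^{M_{\mathrm{o}}}$ only make explicit what the paper leaves implicit.
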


The following Corollary provides an upper bound to the asymptotic error of Algorithm~\ref{alg:time-varying-DPGM}.

\begin{corollary}[Asymptotic error bound]\label{cor:asymptotical-error}
Let Assumptions~\ref{as:time-varying},~\ref{as:distance-optima} hold, suppose that the step size $\alpha$ satisfies~\eqref{eq:step-size-condition}, and let 
$$
    \delta := \max\left\{ c, \rho(\Wm), \zeta_\varphi \right\} \in (0,1).
$$

The asymptotic error of Algorithm~\ref{alg:time-varying-DPGM} under Assumption~\ref{as:stochastic-error} can be bounded as:
\begin{equation}\label{eq:bound_time_var_det}
\begin{split}
	&\limsup_{k \to \infty} \expval{\norm{\x(t_k) - \x^*(t_k)}} \leq \\ &\frac{1}{1 - \delta^{M_{\mathrm{o}}}} \Bigg[ \sigma \delta^{M_{\mathrm{o}}} + \frac{1 - \delta^{M_{\mathrm{o}}+1}}{1 - \delta} \Big( 4 \alpha L_g + \sigma' + 2 \eta \Big) \Bigg].
\end{split}
\end{equation}
\end{corollary}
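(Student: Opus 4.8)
The plan is to turn the per-interval recursion of Proposition~\ref{pr:time-varying-convergence} into an explicit asymptotic bound by iterating it and passing to the limit. Recall from that proposition that $\expval{d(t_{k+1})} \leq A^{M_{\mathrm{o}}} \expval{d(t_k)} + b''$ entrywise. Since $A$ is upper triangular with diagonal entries $c$, $\rho(\Wm)$, $\zeta_\varphi$, all lying in $(0,1)$ under the step-size condition~\eqref{eq:step-size-condition}, the matrix $A^{M_{\mathrm{o}}}$ is Schur stable with spectral radius $\delta^{M_{\mathrm{o}}} < 1$; moreover every entry of $A$, and hence of $A^{M_{\mathrm{o}}}$ and of $b''$, is nonnegative. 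Unrolling the recursion gives $\expval{d(t_k)} \leq (A^{M_{\mathrm{o}}})^k \expval{d(t_0)} + \big(\sum_{j=0}^{k-1} (A^{M_{\mathrm{o}}})^j\big) b''$; the first term vanishes as $k \to \infty$, and the Neumann series $\sum_{j \geq 0} (A^{M_{\mathrm{o}}})^j$ converges to $(\Im - A^{M_{\mathrm{o}}})^{-1}$ (a matrix with nonnegative entries), so that $\limsup_{k \to \infty} \expval{d(t_k)} \leq (\Im - A^{M_{\mathrm{o}}})^{-1} b''$ entrywise, and therefore $\limsup_{k} \expval{\norm{\x(t_k) - \x^*(t_k)}} \leq \begin{bmatrix} 1 & 1 & 0 \end{bmatrix} (\Im - A^{M_{\mathrm{o}}})^{-1} b''$.

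The remaining task is to upper-bound the scalar $\begin{bmatrix} 1 & 1 & 0 \end{bmatrix} (\Im - A^{M_{\mathrm{o}}})^{-1} b''$ by the closed-form expression~\eqref{eq:bound_time_var_det}. I would exploit the triangular structure of $A$: bounding its diagonal entries by their common upper bound $\delta$ and keeping the off-diagonal ($\alpha L_f$) entries, one can dominate each power $A^m$ entrywise and collect the resulting geometric series in $\delta$. After the change of index $\ell \mapsto M_{\mathrm{o}} - 1 - \ell$ one writes $b'' = \sum_{\ell=0}^{M_{\mathrm{o}}-1} A^{\ell} (v + \eta \1_3) + A^{M_{\mathrm{o}}} \begin{bmatrix} \sigma & 0 & \sigma \end{bmatrix}^\top$ with $v := \begin{bmatrix} 2\alpha L_g & 2\alpha L_g + \sigma' & 0 \end{bmatrix}^\top$, and observes that $\begin{bmatrix} 1 & 1 & 0 \end{bmatrix}(v + \eta \1_3) = 4\alpha L_g + \sigma' + 2\eta$ and $\begin{bmatrix} 1 & 1 & 0 \end{bmatrix}\begin{bmatrix} \sigma & 0 & \sigma \end{bmatrix}^\top = \sigma$. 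Combining the sum of the $M_{\mathrm{o}}$ inner contributions with the factor $1/(1 - \delta^{M_{\mathrm{o}}})$ coming from $(\Im - A^{M_{\mathrm{o}}})^{-1}$, and folding in the coupling terms generated by the off-diagonal entries of the powers of $A$, yields the factor $\tfrac{1 - \delta^{M_{\mathrm{o}}+1}}{1 - \delta}$ multiplying $4\alpha L_g + \sigma' + 2\eta$, together with the term $\sigma \delta^{M_{\mathrm{o}}}$, which is exactly~\eqref{eq:bound_time_var_det}.

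I expect the main obstacle to be precisely this last bookkeeping step. The matrix $A$ is not diagonal, so its powers $A^m$ carry off-diagonal terms proportional to $\alpha L_f$ (and to $(\alpha L_f)^2$ in the $(1,3)$ entry), and these couplings feed the $d_3$-component into the $d_1$- and $d_2$-components; absorbing them into the clean scalar bound~\eqref{eq:bound_time_var_det} — rather than into a constant that degrades with $M_{\mathrm{o}}$ — requires a careful choice of the intermediate estimates, which is where the slack between $\tfrac{1-\delta^{M_{\mathrm{o}}}}{1-\delta}$ and $\tfrac{1-\delta^{M_{\mathrm{o}}+1}}{1-\delta}$ in~\eqref{eq:bound_time_var_det} is spent. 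The rest — Schur stability of $A^{M_{\mathrm{o}}}$, convergence of the Neumann series, and entrywise monotonicity of the bounds under nonnegativity — is routine.
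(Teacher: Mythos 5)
Your first step is fine and coincides with the paper's strategy: unroll $\expval{d(t_{k+1})} \leq A^{M_{\mathrm{o}}}\expval{d(t_k)} + b''$, discard the transient, and bound the limsup by a geometric sum. The genuine gap is exactly the step you flag as ``bookkeeping'': the claim that one can keep the off-diagonal $\alpha L_f$ entries of $A$, dominate the powers $A^m$ entrywise, and still fold the resulting coupling terms into the factor $\tfrac{1-\delta^{M_{\mathrm{o}}+1}}{1-\delta}$. This cannot work. Writing $A \leq \delta \Im + \alpha L_f N$ entrywise, with $N$ the nilpotent upper shift, gives for the sum of the first two components of $A^m(v+\eta\1_3)$ the bound $\delta^m\left(4\alpha L_g+\sigma'+2\eta\right) + m\,\delta^{m-1}\alpha L_f\left(2\alpha L_g+\sigma'+2\eta\right) + \binom{m}{2}\delta^{m-2}(\alpha L_f)^2\eta$. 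Summed over $m=0,\dots,M_{\mathrm{o}}-1$, the extra terms are of order $\alpha L_f (1-\delta)^{-2}\left(2\alpha L_g+\sigma'+2\eta\right)$ for large $M_{\mathrm{o}}$ (and the off-diagonal entries of $(\Im - A^{M_{\mathrm{o}}})^{-1}$ add more of the same), whereas the only slack in~\eqref{eq:bound_time_var_det} relative to the ``diagonal-only'' computation is the single term $\delta^{M_{\mathrm{o}}}\left(4\alpha L_g+\sigma'+2\eta\right)/(1-\delta)$, which vanishes as $M_{\mathrm{o}}$ grows. So the exact quantity you propose to bound, namely $\begin{bmatrix}1&1&0\end{bmatrix}(\Im - A^{M_{\mathrm{o}}})^{-1}b''$, is in general strictly larger than the right-hand side of~\eqref{eq:bound_time_var_det}; no careful choice of intermediate estimates closes that gap.

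The paper takes a cruder route precisely to avoid this: it scalarizes the recursion at the outset, replacing $A$ (and the powers of $A$ inside $b''$) by the scalar $\delta$ on the grounds that the diagonal entries of $A$ are bounded by $\delta$ -- i.e., the off-diagonal couplings are dropped rather than retained -- and then two geometric sums immediately give $\tfrac{1}{1-\delta^{M_{\mathrm{o}}}}\bigl[\sigma\delta^{M_{\mathrm{o}}} + \tfrac{1-\delta^{M_{\mathrm{o}}+1}}{1-\delta}\left(4\alpha L_g+\sigma'+2\eta\right)\bigr]$. To make your matrix-faithful version rigorous you would have to either accept additional $\alpha L_f$-dependent terms in the final bound, or justify the scalarization differently (e.g., measuring $d$ in a weighted norm in which the induced norm of $A$ is close to its spectral radius, at the price of modified constants); as written, the proposal asserts a conclusion its own intermediate estimates do not deliver.
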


The form of the bound~\eqref{eq:bound_time_var_det} is similar to the ones in existing works for centralized, exact, and online methods; see, \textit{e.g.},~\cite{dall2019optimization,dixit2019online,Bernstein2019feedback}. However, in the setting of this paper, the bound~\eqref{eq:bound_time_var_det} includes the additional term $\norm{(\Im - \Wm) \tilde{\x}(t_k)}$ that is due to the relaxation~\eqref{eq:regularized-problem}; indeed, since $\tilde{\x} \notin \operatorname{span}\{ \1 \}$, the term  $\norm{(\Im - \Wm) \tilde{\x}(t_k)}$ is always positive. The bound also shows the effect of the errors in the algorithm.

\begin{remark}[Sources of inexactness]\label{rem:sources-inexactness}
Propositions~\ref{pr:time-invariant-convergence}~and~\ref{pr:time-varying-convergence} prove convergence with a generic additive noise as introduced in~\eqref{eq:time-invariant-DPGM}. The different sources of inexactness can also be spelled out as in the following
\begin{subequations}\label{eq:inexact-DPGM-sources}
\begin{align}
	\y^{\ell+1} &= \Wm \left( \x^\ell + \e_{\mathrm{s}}^\ell \right) - \alpha \left( \nabla f(\x^\ell;t_k) + \e_{\mathrm{g}}^\ell \right) \\
	\x^{\ell+1} &= \prox_{\alpha g(\cdot;t_k)}(\y^{\ell+1}) + \e_{\mathrm{p}}^\ell, \label{eq:inexact-proximal-update}
\end{align}
\end{subequations}
where $\e_{\mathrm{g}}^\ell$, $\e_{\mathrm{p}}^\ell$ and $\e_{\mathrm{s}}^\ell$ are the errors affecting the gradient, proximal, and states at iteration $\ell$, respectively. With this explicit representation of the different inexactness sources, it is not difficult to derive from Corollary~\ref{cor:asymptotical-error}
\begin{align*}
	&\limsup_{k \to \infty} \expval{\norm{\x(t_k) - \x^*(t_k)}} \leq \frac{1}{1 - \delta^{M_{\mathrm{o}}}} \Bigg[ \sigma \delta^{M_{\mathrm{o}}} + \\ &+ \frac{1 - \delta^{M_{\mathrm{o}}+1}}{1 - \delta} \Big( 4 \alpha L_g + \sigma' + 2 \left( \eta_{\mathrm{s}} + \alpha \eta_{\mathrm{g}} + \eta_{\mathrm{p}} \right) \Big) \Bigg],
\end{align*}
where $\eta_{\mathrm{s}}, \eta_{\mathrm{g}}, \eta_{\mathrm{p}}$ are the bounds to the norms of the state, gradient, and proximal mean errors, respectively.
\end{remark}

\section{Numerical Results}\label{sec:simulations}

\subsection{Simulation setup}\label{subsec:simulations-setup}
The simulations are performed for a random graph with $N = 25$ nodes and $\sim 160$ edges. The consensus matrix $W$ is built using the Metropolis-Hastings rule. The nodes are tasked with solving, in a distributed fashion, a sparse linear regression problem; that is, 
$f_i$ and $g_i$ are:  
$$
	f_i(x_i;t_k) = \frac{1}{2} \norm{A_{i,k} x_i - b_{i,k}}^2 \ \ \text{and} \ \ g_i(x_i;t_k) = \lambda_1 \norm{x_i}_1.
$$
Let $b_{i,k} = A_{i,k} y(t_k) + e_{i,k}$ be the noisy measurements of the sparse signal $y(t_k)$ performed by the $i$-th node, with $e_{i,k} \in \mathcal{N}(0,10^{-3})$. The signal has $\lfloor n / 2 \rfloor$ non-zero components, and $\lambda_1$ is set to  $\lambda_1 = 0.01$. Different regression matrices $A_{i,k}$ are randomly generated at each sampling time $t_k$, with condition number of $\sim 100$. The signal $y(t_k)$ has sinusoidal components with different phases uniformly drawn from $[0,\pi]$, angular frequency $0.5$, and the sampling time is $T_\mathrm{s} = 0.01$.

The results presented are averaged over $100$ Monte Carlo iterations. As a performance metric, the \textit{cumulative tracking error} is utilized, which is defined as:
$$
	E_k := \frac{1}{k} \sum_{h = 0}^k \norm{\x(t_h) - \x^*(t_h)}.
$$

The nodes exhibit \textit{errors} caused by additive Gaussian noise on the states, with variance $\sigma_{\mathrm{s}}^2$. The step-sizes of the algorithms implemented in the following numerical results are hand-tuned to achieve the best results for each of them.

\subsection{Results}
A first result is presented in Figure~\ref{fig:TV-comparison}, which illustrates the cumulative tracking error attained by DPGM, PG-EXTRA \cite{shi_proximal_2015}, and NIDS \cite{li_decentralized_2019} for different values of $M_{\mathrm{o}}$; that is, by varying the number of steps of the algorithm within each interval $T_{\mathrm{s}}$.

\begin{figure}[!ht]
\centering
	\includegraphics[scale=0.54]{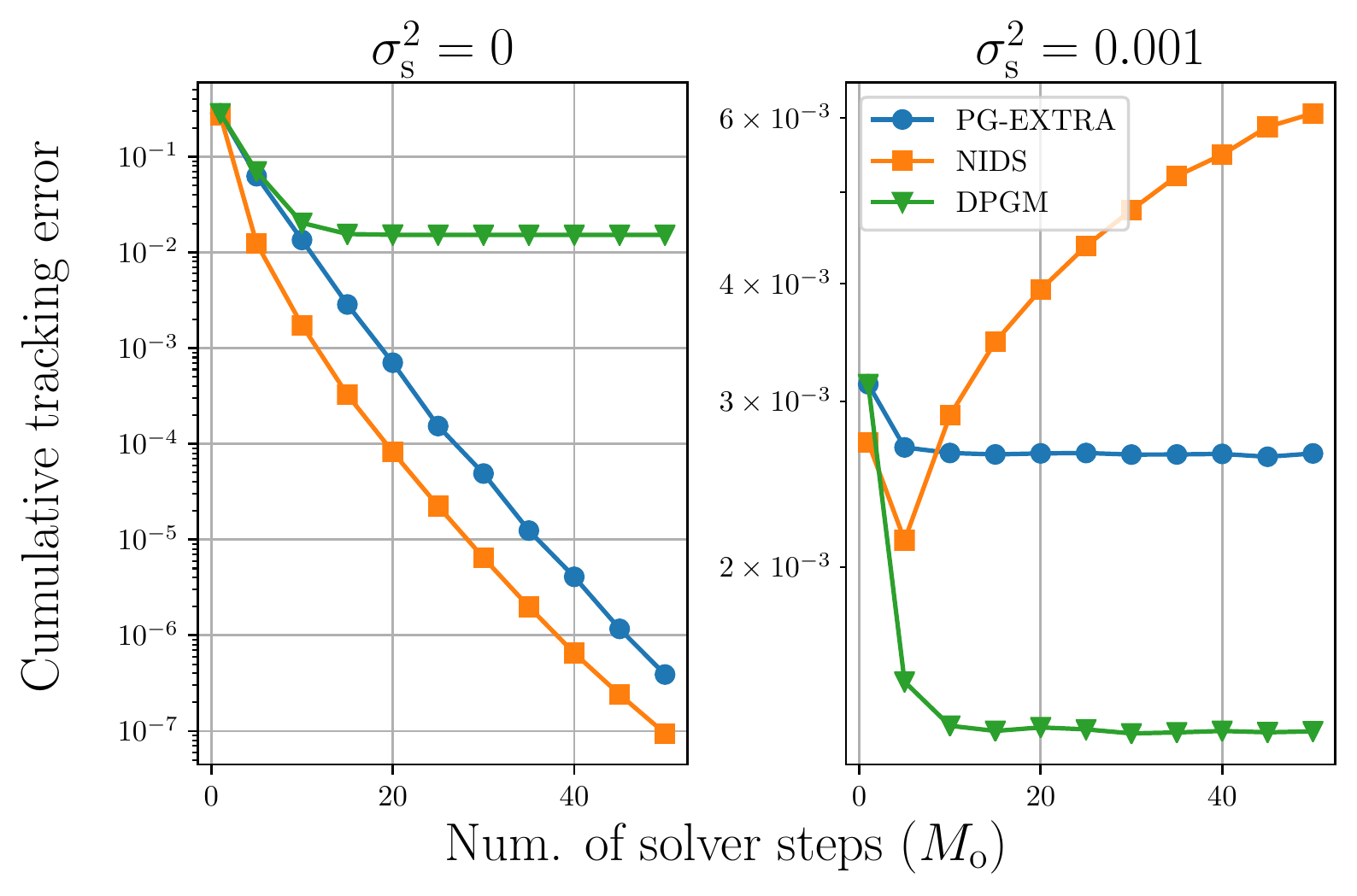}
\caption{Comparison in terms of cumulative tracking error of DPGM (proposed in this paper), PG-EXTRA \cite{shi_proximal_2015}, and NIDS \cite{li_decentralized_2019} for a time-varying sparse linear regression problem, without and with state errors.}
\label{fig:TV-comparison}
\end{figure}

It can be noticed that, in the case of exact algorithmic steps (left plot), PG-EXTRA and NIDS have better performances the larger $M_{\mathrm{o}}$ is, since they converge exactly. On the other hand, when inexactness is introduced and $M_{\mathrm{o}} > 1$, PG-EXTRA attains worse errors than DPGM, while NIDS diverges. Interestingly, when $M_{\mathrm{o}} = 1$, NIDS outperforms DPGM, while PG-EXTRA has the same performance since in this case it reduces to DPGM\footnote{Notice that in \cite{yuan_can_2020} a similar behavior has been observed for smooth, online optimization (with exact algorithmic updates): under the right conditions, inexactly converging primal methods (like DPGM) can outperform exactly converging gradient tracking schemes (as PG-EXTRA and NIDS).}.

Another interesting observation is that  the cumulative error of the proposed DPGM -- as well as PG-EXTRA in the inexact case (right plot) -- decreases only up to a threshold value of $M_{\mathrm{o}}$, and afterwards exhibits a plateau. The following observation explains this behavior. By Corollary~\ref{cor:asymptotical-error}, one  has that
\begin{align}
	&\limsup_{k \to \infty} \expval{\norm{\x(t_k) - \x^*(t_k)}} \leq \frac{\sigma \delta^{M_{\mathrm{o}}}}{1 - \delta^{M_{\mathrm{o}}}} + \nonumber \\ &+ \frac{1}{1 - \delta} \frac{1 - \delta^{M_{\mathrm{o}}+1}}{1 - \delta^{M_{\mathrm{o}}}} \Big( 4 \alpha L_g + \sigma' + 2 \expval{\norm{\e_{\mathrm{s}}^\ell}} \Big) \nonumber \\
	&\simeq \frac{\sigma \delta^{M_{\mathrm{o}}}}{1 - \delta^{M_{\mathrm{o}}}} + \frac{1}{1 - \delta} \Big( 4 \alpha L_g + \sigma' + 2 \expval{\norm{\e_{\mathrm{s}}^\ell}} \Big). \label{eq:time-varying-asymptotical-error}
\end{align}
The right-hand-side of~\eqref{eq:time-varying-asymptotical-error} is therefore the sum of two terms, $\frac{\sigma \delta^{M_{\mathrm{o}}}}{1 - \delta^{M_{\mathrm{o}}}}$, which decreases as $M_{\mathrm{o}}$ increases, and $\frac{1}{1 - \delta} \Big( 4 \alpha L_g + \sigma' + 2 \expval{\norm{\e_{\mathrm{s}}^\ell}} \Big)$, which is constant even if the number of steps $M_{\mathrm{o}}$ varies. Therefore, when the second term becomes dominant over the first one, the cumulative error plateaus.

\begin{table}[!ht]
    \centering
    \caption{Cumulative tracking error for different graph topologies.}
    \label{tab:topologies}
    \begin{tabular}{ccc}
        \textbf{topology} & \textbf{DPGM} & \textbf{PG-EXTRA} \\
        \hline
        \emph{star} & $3.602 \times 10^{-3}$ & $2.799 \times 10^{-3}$ \\
        \emph{circle} & $1.555 \times 10^{-3}$ & $1.756 \times 10^{-3}$ \\
        \emph{circulant (5)} & $7.281 \times 10^{-4}$ & $1.335 \times 10^{-3}$ \\
        \emph{circulant (10)} & $5.736 \times 10^{-4}$ & $1.164 \times 10^{-3}$ \\
        \emph{complete} & $5.526 \times 10^{-4}$ & $1.107 \times 10^{-3}$
    \end{tabular}
\end{table}

Finally, the effect of different network topologies on the performance of DPGM and PG-EXTRA was evaluated, and the results are reported in Table~\ref{tab:topologies}. As one can observe, in the presence of state error, more connected graphs yield smaller cumulative tracking errors. Moreover, DPGM outperforms PG-EXTRA except for the case of a star topology.

\ifarxiv

\appendices
\section{Proofs of Lemmas in Section~\ref{sec:time-invariant}}\label{app:lemma-proofs}

\subsection{Proof of Lemma~\ref{lem:expectation_norm}}

By definition of covariance matrix it holds that
\begin{align}
	\operatorname{tr}(\bm{\Sigma}) &= \expval{\norm{\e}^2} - 2 \expval{\langle \e, \bm{\mu} \rangle} + \norm{\bm{\mu}}^2 \nonumber \\
	& = \expval{\norm{\e}^2} - \norm{\bm{\mu}}^2 \label{eq:trace-covariance}
\end{align}
where the fact $\expval{\langle \e, \bm{\mu} \rangle} = \norm{\bm{\mu}}^2$ -- consequence of the linearity of the expected value -- was used for the second equality. Rearranging~\eqref{eq:trace-covariance}  yields
\begin{equation}\label{eq:bound-square-norm}
	\expval{\norm{\e}^2} = \operatorname{tr}(\bm{\Sigma}) + \norm{\bm{\mu}}^2 < +\infty.
\end{equation}
Moreover, since $\sqrt{\cdot}$ is a concave function, the Jensen inequality holds and one has:
\begin{equation}\label{eq:jensen-inequality}
	\expval{\norm{\e}} = \expval{\sqrt{\norm{\e}^2}} \leq \sqrt{\expval{\norm{\e}^2}}.
\end{equation}
Combining~\eqref{eq:bound-square-norm} and~\eqref{eq:jensen-inequality} proves the Lemma. \qed

\subsection{Proof of Lemma~\ref{lem:properties-gamma}}
The result can be obtained starting as follows
\begin{align*}
	\norm{\nabla \varphi_\alpha(\x) - \nabla \varphi_\alpha(\y)} &\leq \norm{\Im - \Wm} \norm{\x - \y} + \alpha L_f \norm{\x - \y} \\
	&\leq (1 - \lambda_\mathrm{min}(\Wm) + \alpha L_f) \norm{\x - \y}
\end{align*}
where the triangle inequality and the smoothness of $f$ were used.

The strong convexity follows from the strong convexity of $f$ and the fact that $\Im - \Wm$ is positive semidefinite. \qed

\section{Proof of Proposition~\ref{pr:time-invariant-convergence}}\label{app:ti-convergence}

The proof of Proposition~\ref{pr:time-invariant-convergence} relies on the following Lemmas, which are stated and proved for the exact DPGM, setting $\e^\ell = \0$ in~\eqref{eq:time-invariant-DPGM}. Building on these auxiliary results, one can then prove the convergence of the inexact DPGM.

\subsection{Auxiliary results}

\begin{lemma}[Implicit update]\label{lem:implicit-update}
Algorithm~\eqref{eq:time-invariant-DPGM} can be characterized by the following implicit update
\begin{equation}\label{eq:implicit-update}
	\x^{\ell+1} = \Wm \x^\ell - \alpha \left( \nabla f(\x^\ell) + \tilde{\nabla} g(\x^{\ell+1}) \right)
\end{equation}
where $\tilde{\nabla} g(\x^{\ell+1}) \in \partial g(\x^{\ell+1})$ is a subgradient of $g$.
\end{lemma}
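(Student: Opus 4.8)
The plan is to obtain the implicit form directly from the first-order optimality condition of the proximal step, with $\e^\ell = \0$ as in the statement. First I would recall that, by definition of the proximal operator, the update $\x^{\ell+1} = \prox_{\alpha g}(\y^{\ell+1})$ means that $\x^{\ell+1}$ is the (unique, by strong convexity of the quadratic) minimizer of $x \mapsto g(x) + \tfrac{1}{2\alpha}\|x - \y^{\ell+1}\|^2$. Since $g$ is closed, convex and proper (and in fact finite-valued under Assumption~\ref{as:local-costs}), and the quadratic term is differentiable, the subdifferential sum rule applies and the minimizer is characterized by the inclusion $\0 \in \partial g(\x^{\ell+1}) + \tfrac{1}{\alpha}\big(\x^{\ell+1} - \y^{\ell+1}\big)$.

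Next I would rewrite this inclusion as the existence of a subgradient: there is some $\tilde{\nabla} g(\x^{\ell+1}) \in \partial g(\x^{\ell+1})$ such that $\tfrac{1}{\alpha}(\x^{\ell+1} - \y^{\ell+1}) = -\tilde{\nabla} g(\x^{\ell+1})$, i.e.
\begin{equation*}
\x^{\ell+1} = \y^{\ell+1} - \alpha \tilde{\nabla} g(\x^{\ell+1}).
\end{equation*}
Substituting the gradient step~\eqref{eq:DPGM-gradient-step}, namely $\y^{\ell+1} = \Wm \x^\ell - \alpha \nabla f(\x^\ell)$, into this identity yields exactly $\x^{\ell+1} = \Wm \x^\ell - \alpha\big(\nabla f(\x^\ell) + \tilde{\nabla} g(\x^{\ell+1})\big)$, which is~\eqref{eq:implicit-update}.

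I do not expect a genuine obstacle here; the proof is a one-line consequence of the prox optimality condition. The only subtlety worth flagging is that the relation is \emph{implicit}: the subgradient $\tilde{\nabla} g$ is evaluated at the new iterate $\x^{\ell+1}$ rather than at $\x^\ell$, so~\eqref{eq:implicit-update} is a characterization of $\x^{\ell+1}$ and not an explicit recursion, and the optimality condition guarantees the \emph{existence} of such a subgradient (it does not single out a particular element of $\partial g(\x^{\ell+1})$). This is precisely the form that will later be convenient for the descent-type estimates underlying Proposition~\ref{pr:time-invariant-convergence}.
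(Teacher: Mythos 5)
Your proof is correct and follows the same route as the paper: both derive $\y^{\ell+1} - \x^{\ell+1} \in \alpha \partial g(\x^{\ell+1})$ from the prox optimality condition and substitute the gradient step~\eqref{eq:DPGM-gradient-step}. Your remark that the relation only asserts the existence of a suitable subgradient (and is implicit in $\x^{\ell+1}$) is consistent with how the lemma is used later.
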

\begin{proof}
By the definition of proximal operator, it holds that $\x^{\ell+1} = \prox_{\alpha g}(\y^{\ell+1})$ if and only if $\y^{\ell+1} - \x^{\ell+1} \in \alpha \partial g(\x^{\ell+1})$, which implies that there exists a subgradient $\tilde{\nabla} g(\x^{\ell+1}) \in \partial g(\x^{\ell+1})$ such that $\y^{\ell+1} = \x^{\ell+1} + \alpha \tilde{\nabla} g(\x^{\ell+1})$, and~\eqref{eq:implicit-update} follows.
\end{proof}

\begin{lemma}[Bounded subgradients]\label{lem:bounded-subgradient}
The norm of the subgradient $\nabla f(\x^\ell) + \tilde{\nabla} g(\x^{\ell+1})$ in~\eqref{eq:implicit-update} can be bounded as:
\begin{align}
	\norm{\nabla f(\x^\ell) + \tilde{\nabla} g(\x^{\ell+1})} &\leq L_f \norm{\x^\ell - \tilde{\x}} + \\ &\qquad + 2 L_g + \frac{1}{\alpha} \norm{(\Im - \Wm) \tilde{\x}} \nonumber.
\end{align}
\end{lemma}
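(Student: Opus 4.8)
\textbf{Proof proposal for Lemma~\ref{lem:bounded-subgradient}.}

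The plan is to exploit the optimality characterization of $\tilde{\x}$ for the relaxed problem~\eqref{eq:regularized-problem} and turn it into a bound on the quantity $\nabla f(\x^\ell) + \tilde{\nabla} g(\x^{\ell+1})$ appearing in the implicit update~\eqref{eq:implicit-update}. First I would write down the first-order optimality condition for~\eqref{eq:regularized-problem}: since $\tilde{\x}$ minimizes $\varphi_\alpha(\x) + \alpha g(\x)$ with $\varphi_\alpha(\x) = (1/2)\x^\top(\Im-\Wm)\x + \alpha f(\x)$, there is a subgradient $\tilde{\nabla} g(\tilde{\x}) \in \partial g(\tilde{\x})$ with $(\Im - \Wm)\tilde{\x} + \alpha \nabla f(\tilde{\x}) + \alpha \tilde{\nabla} g(\tilde{\x}) = \0$, i.e. $\nabla f(\tilde{\x}) + \tilde{\nabla} g(\tilde{\x}) = -\tfrac{1}{\alpha}(\Im - \Wm)\tilde{\x}$.

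Next I would add and subtract this reference point and split via the triangle inequality:
\begin{align*}
	\norm{\nabla f(\x^\ell) + \tilde{\nabla} g(\x^{\ell+1})}
	&\leq \norm{\nabla f(\x^\ell) - \nabla f(\tilde{\x})}
	 + \norm{\tilde{\nabla} g(\x^{\ell+1}) - \tilde{\nabla} g(\tilde{\x})} \\
	&\qquad + \norm{\nabla f(\tilde{\x}) + \tilde{\nabla} g(\tilde{\x})}.
\end{align*}
The first term is bounded by $L_f \norm{\x^\ell - \tilde{\x}}$ using $L_f$-smoothness of $f$ (Assumption~\ref{as:local-costs}). The last term is exactly $\tfrac{1}{\alpha}\norm{(\Im - \Wm)\tilde{\x}}$ by the optimality condition just derived. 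For the middle term, since each $g_i$ is $L_{g_i}$-Lipschitz, every subgradient of $g$ has norm at most $L_g$ (in the separable scalar setting, $\norm{\tilde{\nabla} g(\cdot)} \le \sqrt{\sum_i L_{g_i}^2}$, which is bounded componentwise by $L_g$ per coordinate; one bounds it by $L_g$ after absorbing the dimension as in the paper's convention, or simply uses $\norm{\tilde{\nabla} g(\x^{\ell+1})} \le L_g$ and $\norm{\tilde{\nabla} g(\tilde{\x})} \le L_g$), so the middle term is at most $2L_g$ by one more triangle inequality. Collecting the three pieces gives precisely the claimed bound.

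The only delicate point is the handling of the subgradient norms of $g$: one must be careful that "$L_g$-Lipschitz" for each scalar $g_i$ yields a subgradient bound $|\tilde\nabla g_i| \le L_{g_i} \le L_g$, and that the stacked subgradient $\tilde\nabla g(\x)$ is then controlled consistently with how the paper aggregates local Lipschitz constants (it writes $L_g = \max_i L_{g_i}$ and treats the stacked bound as $L_g$, implicitly in the scalar-per-node normalization). I would state this explicitly so the factor $2L_g$ in the middle term is justified rather than merely asserted. Everything else is a routine application of smoothness, the subgradient inequality, and the optimality condition for~\eqref{eq:regularized-problem}; no deeper argument is needed.
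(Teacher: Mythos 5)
Your proposal is correct and follows essentially the same route as the paper: invoke the first-order optimality condition for the relaxed problem~\eqref{eq:regularized-problem}, add and subtract $\nabla f(\tilde{\x}) + \tilde{\nabla} g(\tilde{\x})$, and bound the pieces by $L_f$-smoothness, the subgradient bound $L_g$ from Lipschitz continuity of $g$, and $\tfrac{1}{\alpha}\norm{(\Im-\Wm)\tilde{\x}}$. The only cosmetic difference is that you group the two subgradient terms as $\norm{\tilde{\nabla} g(\x^{\ell+1}) - \tilde{\nabla} g(\tilde{\x})} \leq 2L_g$ while the paper bounds them separately, and your explicit caveat about the stacked subgradient norm versus the per-node constant $L_g$ is a fair (and slightly more careful) reading of the paper's convention.
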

\begin{proof}
By the optimality condition for the regularized problem~\eqref{eq:regularized-problem} it holds that $\nabla f(\tilde{\x}) + \tilde{\nabla} g(\tilde{\x}) + (1/\alpha) (\Im - \Wm) \tilde{\x} = 0$ for any subgradient $\tilde{\nabla} g(\tilde{\x}) \in \partial g(\tilde{\x})$. Therefore the following chain of inequalities holds:
\begin{align}
	&\norm{\nabla f(\x^\ell) + \tilde{\nabla} g(\x^{\ell+1})} \nonumber \\
	&= \big\| \nabla f(\x^\ell) + \tilde{\nabla} g(\x^{\ell+1}) - \nabla f(\tilde{\x}) - \tilde{\nabla} g(\tilde{\x}) + \\ &\hspace{3cm} - (1/\alpha) (\Im - \Wm)\tilde{\x} \big\| \nonumber \\
	&\leq \norm{\nabla f(\x^\ell) - \nabla f(\tilde{\x})} + \norm{\tilde{\nabla} g(\x^{\ell+1})} + \nonumber \\ &\hspace{3cm} + \norm{\tilde{\nabla} g(\tilde{\x})} + (1/\alpha) \norm{(\Im - \Wm)\tilde{\x}} \nonumber \\
	&\leq L_f \norm{\x^\ell - \tilde{\x}} + 2 L_g + (1/\alpha) \norm{(\Im - \Wm)\tilde{\x}} \label{eq:intermediate-gradient-bound}
\end{align}
where the triangle inequality was applied for the first inequality, and Lipschitz continuity of the gradient of $f$ and of $g$ for the second inequality.
\end{proof}

\begin{lemma}[Bounded distance from average]\label{lem:distance-from-average}
The distance between the states and the average can be upper bounded as:
\begin{align}
	\norm{\x^{\ell+1} - \bar{\x}^{\ell+1}} &\leq \rho(\Wm) \norm{\x^\ell - \bar{\x}^\ell} + \label{eq:distance-from-average} \\ &\qquad + \norm{\alpha \left( \nabla f(\x^\ell) + \tilde{\nabla} g(\x^{\ell+1}) \right)} \nonumber
\end{align}
where $\rho(\Wm) \in (0,1)$ is the absolute value of the largest singular value strictly smaller than one.
\end{lemma}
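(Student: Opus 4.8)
The plan is to combine the implicit update of Lemma~\ref{lem:implicit-update} with the spectral structure of the doubly stochastic matrix $\Wm$. Introduce $P := \frac{1}{N}\1_N\1_N^\top$, the orthogonal projection onto the consensus subspace $\operatorname{span}\{\1_N\}$, so that $\bar{\x}^\ell = P\x^\ell$ and $\x^\ell - \bar{\x}^\ell = (\Im - P)\x^\ell$ belongs to the orthogonal complement of $\1_N$. Applying $\Im - P$ to~\eqref{eq:implicit-update} (with $\e^\ell = \0$, since the auxiliary lemmas are stated for the exact DPGM) gives
$$
	\x^{\ell+1} - \bar{\x}^{\ell+1} = (\Im - P)\Wm\x^\ell - \alpha(\Im - P)\big(\nabla f(\x^\ell) + \tilde{\nabla}g(\x^{\ell+1})\big),
$$
and~\eqref{eq:distance-from-average} will follow from the triangle inequality once each term on the right is bounded.

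For the first term, I would use that $\Wm$ is symmetric and doubly stochastic (Assumption~\ref{as:graph}), hence $\Wm\1_N = \1_N$ and $P\Wm = \Wm P = P$. This yields $(\Im - P)\Wm\x^\ell = \Wm\x^\ell - \bar{\x}^\ell = \Wm(\x^\ell - \bar{\x}^\ell)$, and the vector $\x^\ell - \bar{\x}^\ell$ lies in $\1_N^\perp$, a subspace that $\Wm$ leaves invariant. On that subspace the operator norm of $\Wm$ is exactly $\rho(\Wm) < 1$, so $\norm{(\Im - P)\Wm\x^\ell} = \norm{\Wm(\x^\ell - \bar{\x}^\ell)} \leq \rho(\Wm)\norm{\x^\ell - \bar{\x}^\ell}$. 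For the second term, I would simply observe that $\Im - P$ is an orthogonal projection, hence non-expansive, so $\norm{(\Im - P)\,\alpha(\nabla f(\x^\ell) + \tilde{\nabla}g(\x^{\ell+1}))} \leq \norm{\alpha(\nabla f(\x^\ell) + \tilde{\nabla}g(\x^{\ell+1}))}$. Summing the two bounds gives the claim.

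The only genuinely delicate point is the spectral estimate $\norm{\Wm\v} \leq \rho(\Wm)\norm{\v}$ for $\v \perp \1_N$: it relies on the connectedness of $\mathcal{G}$ (so that the eigenvalue $1$ of $\Wm$ is simple), on $\Wm$ being stable in the sense of the paper's Notation section, and on the fact that, since $\Wm$ is symmetric, its singular values coincide with the absolute values of its eigenvalues, so that the restriction of $\Wm$ to $\1_N^\perp$ has operator norm precisely $\rho(\Wm)$. Everything else is routine bookkeeping with the commutation identities $P\Wm = \Wm P = P$ and the non-expansiveness of the projections.
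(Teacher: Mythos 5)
Your proposal is correct and follows essentially the same route as the paper: project the implicit update of Lemma~\ref{lem:implicit-update} with $\Im - \frac{1}{N}\1_N\1_N^\top$, use double stochasticity to commute the projector with $\Wm$, bound the consensus-error term by $\rho(\Wm)$ via the spectral behaviour of $\Wm$ on $\1_N^\perp$, and use non-expansiveness of the projection on the gradient/subgradient term. The only cosmetic difference is that you bound $\norm{\Wm(\x^\ell-\bar{\x}^\ell)}$ by restricting $\Wm$ to the invariant subspace $\1_N^\perp$ (invoking symmetry of $\Wm$), whereas the paper keeps the projector explicit and bounds $\norm{\bm{\Pi}_{\{\1\}^\perp}\Wm}\leq\rho(\Wm)$ -- the same spectral fact stated slightly differently.
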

\begin{proof}
For simplicity, denote $\Em := \bm{1}_N\bm{1}_N^\top / N$. Using~\eqref{eq:implicit-update}, one can write the update for the distance from the average as:
\begin{align}
	\x^{\ell+1} - \bar{\x}^{\ell+1} &= \Wm \x^\ell - \Em \Wm \x^\ell + \label{eq:evolution-distance-average} \\ &- \alpha (\Im_{N} - \Em) \left( \nabla f(\x^\ell) + \tilde{\nabla} g(\x^{\ell+1}) \right). \nonumber
\end{align}

\noindent One can observe the following two facts:
\begin{enumerate}
	\item matrix $\Im_{N} - \Em =: \bm{\Pi}_{\{ \1 \}^\perp}$ is the projection onto the space orthogonal to the consensus space $\operatorname{span} \{ \1 \}$, and thus it verifies $(\x^\ell - \bar{\x}^\ell) \perp \1_{N}$, $\forall \ell \in \mathbb{N}$;
	\item given that $\Wm$ and $\Em$ commute (due to the double stochasticity of $\Wm$), then it holds
	$$
		\Wm \x^\ell - \Em \Wm \x^\ell = \Wm (\Im_{N} - \Em) \x^\ell = \Wm (\x^\ell - \bar{\x}^\ell).
	$$
\end{enumerate}
Using fact 2) into~\eqref{eq:evolution-distance-average} one can rewrite it as
\begin{align}
	\x^{\ell+1} - \bar{\x}^{\ell+1} &= \Wm (\x^\ell - \bar{\x}^\ell) + \label{eq:evolution-distance-average-bis} \\ &- \alpha (\Im_{N} - \Em) \left( \nabla f(\x^\ell) + \tilde{\nabla} g(\x^{\ell+1}) \right). \nonumber
\end{align}
Moreover, by fact 1. it holds that $\x^\ell - \bar{\x}^\ell$ will always be perpendicular to the consensus space $\operatorname{span} \{ \1 \}$, and so one can write~\eqref{eq:evolution-distance-average-bis} as:
\begin{align}
	\x^{\ell+1} - \bar{\x}^{\ell+1} &= \bm{\Pi}_{\{ \1 \}^\perp} \Wm (\x^\ell - \bar{\x}^\ell) + \\ &- \alpha (\Im_{N} - \Em) \left( \nabla f(\x^\ell) + \tilde{\nabla} g(\x^{\ell+1}) \right). \nonumber
\end{align}

Taking the norm on both sides, and using the triangle inequality, one obtains
\begin{align*}
	&\norm{\x^{\ell+1} - \bar{\x}^{\ell+1}} \leq \norm{\bm{\Pi}_{\{ \1 \}^\perp} \Wm} \norm{\x^\ell - \bar{\x}^\ell} \\ &\qquad+ \norm{\Im_{N} - \Em} \norm{\alpha \left( \nabla f(\x^\ell) + \tilde{\nabla} g(\x^{\ell+1}) \right)} \\
	&\qquad \leq \rho(\Wm) \norm{\x^\ell - \bar{\x}^\ell} + \norm{\alpha \left( \nabla f(\x^\ell) + \tilde{\nabla} g(\x^{\ell+1}) \right)}
\end{align*}
where the fact that $\norm{\Im_{N} - \Em} = 1$ was used.
\end{proof}

\begin{lemma}[Bounded distance from solution]\label{lem:distance-from-solution}
Assume that the step size $\alpha$ satisfies~\eqref{eq:step-size-condition}. Then, the average has a bounded distance from the solution $\x^*$ to the original problem~\eqref{eq:time-invariant-problem}; in particular, 
\begin{equation}\label{eq:distance-optimal-solution}
	\norm{\bar{\x}^{\ell+1} - \x^*} \leq c \norm{\bar{\x}^\ell - \x^*} + \alpha L_f \norm{\x^\ell - \bar{\x}^\ell} + 2 \alpha L_g.
\end{equation}
\end{lemma}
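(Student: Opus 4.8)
The plan is to track the evolution of the averaged iterate $\bar{\x}^{\ell+1} = \Em \x^{\ell+1}$ using the implicit update~\eqref{eq:implicit-update}, and then compare it with a gradient step on the \emph{original} consensus-reduced problem $\min_x \sum_i (f_i(x) + g_i(x))$, whose minimizer is $x^*$ (so that $\bar\x^* = \x^* = \1_N \kron x^*$). First I would left-multiply~\eqref{eq:implicit-update} by $\Em = \1_N\1_N^\top/N$; since $\Wm$ is doubly stochastic, $\Em \Wm = \Em$, so
\begin{align*}
	\bar{\x}^{\ell+1} = \Em \x^\ell - \alpha \Em \big( \nabla f(\x^\ell) + \tilde{\nabla} g(\x^{\ell+1}) \big) = \bar{\x}^\ell - \alpha \Em \big( \nabla f(\x^\ell) + \tilde{\nabla} g(\x^{\ell+1}) \big).
\end{align*}
The idea is then to insert and subtract the ``ideal'' quantities evaluated at the consensus point $\bar{\x}^\ell$: write $\nabla f(\x^\ell) = \nabla f(\bar{\x}^\ell) + (\nabla f(\x^\ell) - \nabla f(\bar{\x}^\ell))$, and similarly replace $\tilde\nabla g(\x^{\ell+1})$ by a subgradient evaluated at $\bar{\x}^\ell$ plus a correction. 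The key algebraic observation is that $\Em(\nabla f(\bar{\x}^\ell) + \tilde{\nabla}g(\bar{\x}^\ell))$ is exactly $\1_N \kron \frac1N \sum_i(\nabla f_i(\bar x^\ell) + \tilde\nabla g_i(\bar x^\ell))$, i.e. the (sub)gradient of the aggregate cost, so $\bar{\x}^\ell - \alpha\Em(\nabla f(\bar{\x}^\ell)+\tilde\nabla g(\bar{\x}^\ell))$ is a (sub)gradient/proximal-type step of step size $\alpha$ toward $x^*$ on a function that is $m_f$-strongly convex and $L_f$-smooth (for the $f$ part).

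Next I would bound $\norm{\bar{\x}^{\ell+1} - \x^*}$ by the triangle inequality, splitting it into (i) the contraction of the pure gradient step on the strongly convex smooth aggregate, which under the step-size condition $\alpha < 2/(L_f + m_f)$ gives the standard contraction factor $c = \sqrt{1 - 2\alpha m_f L_f/(m_f+L_f)} \in (0,1)$ — here I would invoke the classical result that for an $m$-strongly convex $L$-smooth function, $\norm{z - \alpha\nabla h(z) - (w - \alpha\nabla h(w))} \le \sqrt{1 - 2\alpha mL/(m+L)}\,\norm{z-w}$; (ii) the gradient-perturbation term $\alpha\norm{\Em(\nabla f(\x^\ell) - \nabla f(\bar{\x}^\ell))} \le \alpha L_f \norm{\x^\ell - \bar{\x}^\ell}$, using $\norm{\Em}=1$ and $L_f$-smoothness; and (iii) the subgradient terms from $g$. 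For (iii), since each $g_i$ is $L_g$-Lipschitz, every subgradient of $g$ (or of its averaged version) has norm at most $L_g$ per block; accounting for the difference between the subgradient at $\x^{\ell+1}$ and one at the consensus point $x^*$, each contributes at most $\alpha L_g$, yielding the $2\alpha L_g$ term. (One has to be slightly careful that the subgradient of $g$ appearing in~\eqref{eq:implicit-update} is evaluated at $\x^{\ell+1}$, not at $\bar{\x}^\ell$; but since the bound only uses $\norm{\tilde\nabla g(\cdot)} \le L_g$ uniformly, this causes no trouble — we simply bound $\alpha\norm{\Em\tilde\nabla g(\x^{\ell+1})} \le \alpha L_g$ and $\alpha\norm{\Em\tilde\nabla g(x^*)} \le \alpha L_g$ separately.)

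The step I expect to be the main obstacle is the clean handling of item (iii): the nonsmooth part does not contract, and one must argue that comparing against the first-order optimality condition at $x^*$ — namely $\nabla f(x^*) + \tilde\nabla g(x^*) = 0$ for the aggregate problem in the unconstrained reduced form — legitimately absorbs the $g$-subgradient into a bounded additive perturbation without destroying the contraction obtained from $f$ alone. Concretely, I would peel off the $g$ contribution \emph{before} applying the contraction estimate, so that the contraction is applied only to the map $z \mapsto z - \alpha \tfrac1N\1_N\1_N^\top \nabla f(z)$ restricted to the consensus subspace (where it inherits $m_f$-strong convexity and $L_f$-smoothness), and the two $\alpha L_g$ pieces plus the $\alpha L_f\norm{\x^\ell - \bar{\x}^\ell}$ piece are collected as the remainder. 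Combining all three contributions gives exactly~\eqref{eq:distance-optimal-solution}.
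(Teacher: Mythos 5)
Your proposal is correct and follows essentially the same route as the paper's proof: average the implicit update using double stochasticity, invoke the optimality condition $\tfrac{\1_N^\top}{N}\left(\nabla f(\x^*)+\tilde{\nabla}g(\x^*)\right)=0$, split by the triangle inequality, apply the standard strongly-convex/smooth contraction (Nesterov) to the $f$-gradient step at the consensus average under $\alpha<2/(m_f+L_f)$, and bound the gradient mismatch by $\alpha L_f\norm{\x^\ell-\bar{\x}^\ell}$ and the two $g$-subgradient terms by $\alpha L_g$ each. The only cosmetic difference is that you work with the vector average $\Em\x^\ell$ throughout, while the paper works with the scalar average and rescales by $\sqrt{N}$ at the end.
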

\begin{proof}
For simplicity of exposition, consider the ``scalar'' average $\bar{x}^{\ell+1}$, characterized by the update
\begin{equation}\label{eq:scalar-average}
	\bar{x}^{\ell+1} = \bar{x}^\ell - \alpha \frac{\bm{1}_N^\top}{N} \left( \nabla f(\x^\ell) + \tilde{\nabla} g(\x^{\ell+1}) \right)
\end{equation}
where column stochasticity of $\Wm$ was used.

\noindent By the optimality condition of problem~\eqref{eq:time-invariant-problem}, it holds that $(\1_N^\top / N) \left( \nabla f(\x^*) + \tilde{\nabla} g(\x^*) \right) = 0$, and thus this term can be added to the right-hand side of~\eqref{eq:scalar-average}. Moreover, adding $x^*$ on both sides, taking the norm and using the triangle inequality yields:
\begin{equation}\label{eq:distance-average-to-optimal-solution}
\begin{split}
	&\norm{\bar{x}^{\ell+1} - x^*} \\ 
	&\leq \norm{\bar{x}^\ell - x^* - \alpha \frac{\bm{1}_N^\top}{N} \left( \nabla f(\bar{\x}^\ell) - \nabla f(\x^*) \right)} + \\
	&+ \alpha \norm{\frac{\bm{1}_N^\top}{N} \left( \nabla f(\x^\ell) - \nabla f(\bar{\x}^\ell) \right)} + \\
	&+ \alpha \norm{\frac{\bm{1}_N^\top}{N} \tilde{\nabla} g(\x^{\ell+1})} + \alpha \norm{\frac{\bm{1}_N^\top}{N} \tilde{\nabla} g(\x^*)}.
\end{split}
\end{equation}

The second through fourth terms on the right-hand side of~\eqref{eq:distance-average-to-optimal-solution} can be bound using the Lipschitz continuity of $\nabla f$ and $g$. Indeed:
\begin{equation*}
\begin{split}
	&\norm{\frac{\bm{1}_N^\top}{N} \left( \nabla f(\x^\ell) - \nabla f(\bar{\x}^\ell) \right)} \\ &\leq \norm{\frac{\bm{1}_N^\top}{N}} \norm{\nabla f(\x^\ell) - \nabla f(\bar{\x}^\ell)} \leq \frac{L_f}{\sqrt{N}} \norm{\x^\ell - \bar{\x}^\ell}
\end{split}
\end{equation*}
and, for any $\x$:
$$
	\norm{\frac{\bm{1}_N^\top}{N} \tilde{\nabla} g(\x)} \leq \norm{\frac{\bm{1}_N^\top}{N}} \norm{\tilde{\nabla} g(\x)} \leq \frac{L_g}{\sqrt{N}}.
$$

The square of the first term on the right-hand side~\eqref{eq:distance-average-to-optimal-solution} is now analyzed. By the definition of norm square it holds that
\begin{align*}
	&\norm{\bar{x}^\ell - x^* - \alpha \frac{\bm{1}_N^\top}{N} \left( \nabla f(\bar{\x}^\ell) - \nabla f(\x^*) \right)}^2 = \\
	&\quad = \norm{\bar{x}^\ell - x^*}^2 + \alpha^2 \norm{\frac{\bm{1}_N^\top}{N} \left( \nabla f(\bar{\x}^\ell) - \nabla f(\x^*) \right)}^2 + \\ &\quad - 2\alpha \langle \bar{x}^\ell - x^*, \frac{\bm{1}_N^\top}{N} \left( \nabla f(\bar{\x}^\ell) - \nabla f(\x^*) \right) \rangle
\end{align*}
and an upper bound the inner product is needed. By the fact that $\sum_i f_i / N$ is $m_f$-strongly convex and $L_f$-smooth, using \cite[Theorem~2.1.12]{nesterov_lectures_2018} one can derive
\begin{align*}
	\langle & \bar{x}^\ell - x^*, \frac{\bm{1}_N^\top}{N} \left( \nabla f(\bar{\x}^\ell) - \nabla f(\x^*) \right) \rangle \geq \\
	&\geq \frac{m_f L_f}{m_f + L_f} \norm{\bar{x}^\ell - x^*}^2 + \\ &+ \frac{1}{m_f + L_f} \norm{ \frac{\bm{1}_N^\top}{N} \left( \nabla f(\bar{\x}^\ell) - \nabla f(\x^*) \right)}^2 \, .
\end{align*}

Thus,
\begin{align*}
	&\norm{\bar{x}^\ell - x^* - \alpha \frac{\bm{1}_N^\top}{N} \left( \nabla f(\bar{\x}^\ell) - \nabla f(\x^*) \right)}^2 \leq \\
	&\leq \left( 1 - 2\alpha \frac{m_f L_f}{m_f + L_f} \right) \norm{\bar{x}^\ell - x^*}^2 + \\ &+ \alpha \left( \alpha - \frac{2}{m_f + L_f} \right) \norm{\frac{\bm{1}_N^\top}{N} (\nabla f(\bar{\x}^\ell) - \nabla f(\x^*))}^2.
\end{align*}
Notice that if $\alpha < 2 / (m_f + L_f)$ then the second term on the right-hand side is negative. Moreover, it holds that
$$
	0 < \left( 1 - 2\alpha \frac{m_f L_f}{m_f + L_f} \right) < 1 \quad \Leftrightarrow \quad 0 < \alpha < \frac{1}{2} \frac{m_f + L_f}{m_f L_f}.
$$
However, $2 / (m_f + L_f) < (1/2) (m_f + L_f) / (m_f L_f)$, and thus only ensuring~\eqref{eq:step-size-condition} is sufficient.

As a consequence, it follows that
\begin{align*}
	&\norm{\bar{x}^\ell - x^* - \alpha \frac{\bm{1}_N^\top}{N} \left( \nabla f(\bar{\x}^\ell) - \nabla f(\x^*) \right)}^2 \\ &\qquad \leq \left( 1 - 2\alpha \frac{m_f L_f}{m_f + L_f} \right) \norm{\bar{x}^\ell - x^*}^2
\end{align*}
and taking the square root and using the definition of $c$, one can write:
$$
	\norm{\bar{x}^\ell - x^* - \alpha \frac{\bm{1}_N^\top}{N} \left( \nabla f(\bar{\x}^\ell) - \nabla f(\x^*) \right)} \leq c \norm{\bar{x}^\ell - x^*}.
$$

Substituting these results back into~\eqref{eq:distance-average-to-optimal-solution} then yields:
$$
	\norm{\bar{x}^{\ell+1} - x^*} \leq c \norm{\bar{x}^\ell - x^*} + \alpha \frac{L_f}{\sqrt{N}} \norm{\x^\ell - \bar{\x}^\ell} + 2 \alpha \frac{L_g}{\sqrt{N}}
$$
and, by the fact that $\norm{\bar{\x}^{\ell+1} - \x^*} = \sqrt{N} \norm{\bar{x}^{\ell+1} - x^*}$, inequality~\eqref{eq:distance-optimal-solution} follows.
\end{proof}

\subsection{Convergence analysis}

Exploiting Lemmas~\ref{lem:implicit-update}--\ref{lem:distance-from-solution}, the proof of Proposition~\ref{pr:time-invariant-convergence} is presented next.
\begin{proof}[Proof of Proposition~\ref{pr:time-invariant-convergence}]
Using Lemma~\ref{lem:bounded-subgradient} into the result of Lemma~\ref{lem:distance-from-average} yields the inequality:
\begin{align}
	\norm{\x^{\ell+1} - \bar{\x}^{\ell+1}} &\leq \rho(\Wm) \norm{\x^\ell - \bar{\x}^\ell} + \label{eq:overall-distance-from-average} \\ &+ \alpha L_f \norm{\x^\ell - \tilde{\x}} + 2 \alpha L_g + \norm{(\Im - \Wm) \tilde{\x}}. \nonumber
\end{align}
Moreover, using the linear convergence of the proximal gradient method for strongly convex composite optimization, it follows that~\eqref{eq:linear-convergence-pgm} holds.

Using Lemma~\ref{lem:distance-from-solution} and the inequalities~\eqref{eq:overall-distance-from-average} and~\eqref{eq:linear-convergence-pgm}, one can then write $d^{\ell+1} \leq A d^\ell + b$, with $A$ and $b$ defined as in~\eqref{eq:linear-error-system}. Notice that $A$ is an upper triangular matrix with elements on the diagonal $c, \rho(\Wm), \zeta_\varphi \in (0,1)$; thus, $d^\ell$ is upper-bounded by the state of an asymptotically stable system with a constant input. Finally, using the triangle inequality, one has that:
\begin{align*}
	\norm{\x^{\ell+1} - \x^*} &\leq \norm{\bar{\x}^{\ell+1} - \x^*} + \norm{\x^{\ell+1} - \bar{\x}^{\ell+1}} \nonumber \\
	&= \begin{bmatrix} 1 & 1 & 0 \end{bmatrix} d^{\ell+1},
\end{align*}
which means that $\norm{\x^{\ell+1} - \x^*}$ is the output of a stable system with a fixed input.

Consider now the inexact update~\eqref{eq:time-invariant-DPGM}. The implicit update in Lemma~\ref{lem:implicit-update} therefore becomes, in the inexact case:
\begin{equation}\label{eq:inexact-implicit-update}
	\x^{\ell+1} = \Wm \x^\ell - \alpha \left( \nabla f(\x^\ell) + \tilde{\nabla} g(\x^{\ell+1} - \e^\ell) \right) + \e^\ell.
\end{equation}
Clearly,~\eqref{eq:implicit-update} and~\eqref{eq:inexact-implicit-update} differ for the error term $\e^\ell$, and for the fact that the subgradient is evaluated at $\x^{\ell+1} - \e^\ell$ in the latter. Evaluating the subgradient at $\x^{\ell+1} - \e^\ell$ does not affect the results of Lemmas~\ref{lem:bounded-subgradient}, \ref{lem:distance-from-average} and~\ref{lem:distance-from-solution}, since $\norm{\tilde{\nabla} g(\x^{\ell+1} - \e^\ell)} \leq L_g$.

Using the triangle inequality, one can see that the inexactness introduces the additional term $\norm{\e^\ell}$ in~\eqref{eq:linear-convergence-pgm},~\eqref{eq:distance-from-average}~and~\eqref{eq:distance-optimal-solution}, which yields~\eqref{eq:linear-error-system}. The result then follows by applying Assumption~\ref{as:stochastic-error} to the expected error evolution.
\end{proof}

\section{Proofs of section~\ref{sec:time-varying}}\label{app:time-varying}

\subsection{Proof of Proposition~\ref{pr:time-varying-convergence}}
Consider the inexact DPGM applied to the problem observed at time $t_{k+1}$. Under Assumption~\ref{as:time-varying}, by Proposition~\ref{pr:time-invariant-convergence} one has  that after $M_{\mathrm{o}}$ steps of~\eqref{eq:time-invariant-DPGM} the error is characterized by
\begin{align*}
	&\expval{d(t_{k+1})} \leq A^{M_{\mathrm{o}}} d^0  + \sum_{\ell = 0}^{M_{\mathrm{o}}-1} A^{M_{\mathrm{o}} - \ell - 1} \begin{bmatrix} 2 \alpha L_g \\ 2 \alpha L_g + \sigma' \\ 0 \end{bmatrix} \\ & \hspace{1.15cm} + \sum_{\ell = 0}^{M_{\mathrm{o}}-1} A^{M_{\mathrm{o}} - \ell - 1} \expval{\norm{\e^\ell}} \1_3 \\
	&\expval{\norm{\x(t_{k+1}) - \x^*(t_{k+1})}} \leq \begin{bmatrix} 1 & 1 & 0 \end{bmatrix} \expval{d(t_{k+1})},
\end{align*}
where the bound $\norm{(\Im - \Wm)\tilde{\x}(t_{k+1})} \leq \sigma'$ was used. By the warm-starting of Algorithm~\ref{alg:time-varying-DPGM}, one has that:
$$
	d^0 = \begin{bmatrix} \norm{\bar{\x}(t_k) - \x^*(t_{k+1})} \\ \norm{\x(t_k) - \bar{\x}(t_k)} \\ \norm{\x(t_k) - \tilde{\x}(t_{k+1})} \end{bmatrix},
$$
and using the triangle inequality and~\eqref{eq:sigma}, it is possible to get
\begin{align*}
	\expval{d^0} &\leq \begin{bmatrix} \norm{\bar{\x}(t_k) - \x^*(t_k)} \\ \norm{\x(t_k) - \bar{\x}(t_k)} \\ \norm{\x(t_k) - \tilde{\x}(t_k)} \end{bmatrix} + \begin{bmatrix} \norm{\x^*(t_{k+1}) - \x^*(t_k)} \\ 0 \\ \norm{\tilde{\x}(t_{k+1}) - \tilde{\x}(t_k)} \end{bmatrix} \\
	&\leq \expval{d(t_k)} + \sigma [1, 0, 1]^\top. \hspace{4.75cm}\square
\end{align*}

\subsection{Proof of Corollary~\ref{cor:asymptotical-error}}
By definition, the diagonal elements of $A$ are upper bounded by $\delta$, thus the error~\eqref{eq:time-varying-error-system} can be upper bounded as 
\begin{equation}\label{eq:upper-bound-asymptotical-error}
	\expval{d(t_{k+1})} \leq \delta^{M_{\mathrm{o}}} \expval{d(t_k)} + \sum_{\ell = 0}^{k} \delta^{M_{\mathrm{o}} (k - \ell)} b''.
\end{equation}
Then, iterating~\eqref{eq:upper-bound-asymptotical-error} and taking the limit for $k \to \infty$ yields
$$
	\limsup_{k \to \infty} \expval{d(t_k)} \leq \frac{1}{1 - \delta^{M_{\mathrm{o}}}} b''
$$
which implies the desired result using the fact that $\expval{\norm{\x(t_{k+1}) - \x^*(t_{k+1})}} \leq \begin{bmatrix} 1 & 1 & 0 \end{bmatrix} \expval{d(t_{k+1})}$ and the definition of $b''$. \qed

\fi

\bibliographystyle{IEEEtran}
\bibliography{IEEEabrv,references}

\begin{thebibliography}{10}
\providecommand{\url}[1]{#1}
\csname url@samestyle\endcsname
\providecommand{\newblock}{\relax}
\providecommand{\bibinfo}[2]{#2}
\providecommand{\BIBentrySTDinterwordspacing}{\spaceskip=0pt\relax}
\providecommand{\BIBentryALTinterwordstretchfactor}{4}
\providecommand{\BIBentryALTinterwordspacing}{\spaceskip=\fontdimen2\font plus
\BIBentryALTinterwordstretchfactor\fontdimen3\font minus
  \fontdimen4\font\relax}
\providecommand{\BIBforeignlanguage}[2]{{%
\expandafter\ifx\csname l@#1\endcsname\relax
\typeout{** WARNING: IEEEtran.bst: No hyphenation pattern has been}%
\typeout{** loaded for the language `#1'. Using the pattern for}%
\typeout{** the default language instead.}%
\else
\language=\csname l@#1\endcsname
\fi
#2}}
\providecommand{\BIBdecl}{\relax}
\BIBdecl

\bibitem{simonetto2017decentralized}
A.~Simonetto, A.~Koppel, A.~Mokhtari, G.~Leus, and A.~Ribeiro, ``Decentralized
  prediction-correction methods for networked time-varying convex
  optimization,'' \emph{IEEE Transactions on Automatic Control}, vol.~62,
  no.~11, pp. 5724--5738, 2017.

\bibitem{dixit2019online}
R.~Dixit, A.~S. Bedi, R.~Tripathi, and K.~Rajawat, ``Online learning with
  inexact proximal online gradient descent algorithms,'' \emph{IEEE
  Transactions on Signal Processing}, vol.~67, no.~5, pp. 1338--1352, 2019.

\bibitem{dall2019optimization}
E.~Dall'Anese, A.~Simonetto, S.~Becker, and L.~Madden, ``Optimization and
  learning with information streams: Time-varying algorithms and
  applications,'' \emph{arXiv preprint arXiv:1910.08123}, 2019.

\bibitem{zhang2019distributed}
Y.~Zhang, R.~J. Ravier, V.~Tarokh, and M.~M. Zavlanos, ``Distributed online
  convex optimization with improved dynamic regret,'' \emph{arXiv preprint
  arXiv:1911.05127}, 2019.

\bibitem{SimonettoGlobalsip2014}
A.~Simonetto and G.~Leus, ``Double smoothing for time-varying distributed
  multiuser optimization,'' in \emph{{IEEE} Global Conf. on Signal and
  Information Processing}, Dec. 2014.

\bibitem{Bolognani_feedback_15}
S.~Bolognani, R.~Carli, G.~Cavraro, and S.~Zampieri, ``Distributed reactive
  power feedback control for voltage regulation and loss minimization,''
  \emph{IEEE Trans. on Automatic Control}, vol.~60, no.~4, pp. 966--981, Apr.
  2015.

\bibitem{Bernstein2019feedback}
A.~{Bernstein}, E.~{Dall'Anese}, and A.~{Simonetto}, ``Online primal-dual
  methods with measurement feedback for time-varying convex optimization,''
  \emph{IEEE Trans. on Signal Processing}, vol.~67, no.~8, pp. 1978--1991,
  April 2019.

\bibitem{paternain2019prediction}
S.~Paternain, M.~Morari, and A.~Ribeiro, ``A prediction-correction algorithm
  for real-time model predictive control,'' \emph{arXiv preprint
  arXiv:1911.10051}, 2019.

\bibitem{dhingra2018proximal}
N.~K. {Dhingra}, S.~Z. {Khong}, and M.~R. {Jovanovic}, ``The proximal augmented
  lagrangian method for nonsmooth composite optimization,'' \emph{IEEE
  Transactions on Automatic Control}, vol.~64, no.~7, pp. 2861--2868, July
  2019.

\bibitem{Flaxman05}
A.~D. Flaxman, A.~T. Kalai, and H.~B. McMahan, ``Online convex optimization in
  the bandit setting: Gradient descent without a gradient,'' in
  \emph{Proceedings of the Sixteenth Annual ACM-SIAM Symposium on Discrete
  Algorithms}, ser. SODA '05, Philadelphia, PA, USA, 2005, pp. 385--394.

\bibitem{Hajinezhad19}
D.~{Hajinezhad}, M.~{Hong}, and A.~{Garcia}, ``{ZONE}: Zeroth order nonconvex
  multi-agent optimization over networks,'' \emph{IEEE Transactions on
  Automatic Control}, 2019, early access.

\bibitem{schmidt2011convergence}
M.~Schmidt, N.~L. Roux, and F.~R. Bach, ``Convergence rates of inexact
  proximal-gradient methods for convex optimization,'' in \emph{Advances in
  neural information processing systems}, 2011, pp. 1458--1466.

\bibitem{salzo2012inexact}
S.~Salzo and S.~Villa, ``Inexact and accelerated proximal point algorithms,''
  \emph{Journal of Convex analysis}, vol.~19, no.~4, pp. 1167--1192, 2012.

\bibitem{barre_principled_2020}
M.~Barré, A.~Taylor, and F.~Bach, ``Principled {Analyses} and {Design} of
  {First}-{Order} {Methods} with {Inexact} {Proximal} {Operators},''
  \emph{arXiv:2006.06041 [cs, math]}, Jun. 2020.

\bibitem{kar_distributed_2009}
S.~Kar and J.~M.~F. Moura, ``Distributed {Consensus} {Algorithms} in {Sensor}
  {Networks} {With} {Imperfect} {Communication}: {Link} {Failures} and
  {Channel} {Noise},'' \emph{IEEE Transactions on Signal Processing}, vol.~57,
  no.~1, pp. 355--369, 2009.

\bibitem{majzoobi_analysis_2019}
L.~Majzoobi, F.~Lahouti, and V.~Shah-Mansouri, ``Analysis of {Distributed}
  {ADMM} {Algorithm} for {Consensus} {Optimization} in {Presence} of {Node}
  {Error},'' \emph{IEEE Transactions on Signal Processing}, vol.~67, no.~7, pp.
  1774--1784, 2019.

\bibitem{reisizadeh_exact_2019}
A.~Reisizadeh, A.~Mokhtari, H.~Hassani, and R.~Pedarsani, ``An {Exact}
  {Quantized} {Decentralized} {Gradient} {Descent} {Algorithm},'' \emph{IEEE
  Transactions on Signal Processing}, vol.~67, no.~19, pp. 4934--4947, Oct.
  2019.

\bibitem{magnusson_maintaining_2019}
\BIBentryALTinterwordspacing
S.~Magnússon, H.~Shokri-Ghadikolaei, and N.~Li, ``On {Maintaining} {Linear}
  {Convergence} of {Distributed} {Learning} and {Optimization} under {Limited}
  {Communication},'' \emph{arXiv:1902.11163 [cs, math, stat]}, 2019. [Online].
  Available: \url{http://arxiv.org/abs/1902.11163}
\BIBentrySTDinterwordspacing

\bibitem{hosseini2016online}
S.~Hosseini, A.~Chapman, and M.~Mesbahi, ``Online distributed convex
  optimization on dynamic networks,'' \emph{IEEE Transactions on Automatic
  Control}, vol.~61, no.~11, pp. 3545--3550, 2016.

\bibitem{akbari2015distributed}
M.~Akbari, B.~Gharesifard, and T.~Linder, ``Distributed online convex
  optimization on time-varying directed graphs,'' \emph{IEEE Transactions on
  Control of Network Systems}, vol.~4, no.~3, pp. 417--428, 2015.

\bibitem{shahrampour2017distributed}
S.~Shahrampour and A.~Jadbabaie, ``Distributed online optimization in dynamic
  environments using mirror descent,'' \emph{IEEE Transactions on Automatic
  Control}, vol.~63, no.~3, pp. 714--725, 2017.

\bibitem{onlineSaddle}
A.~Koppel, F.~Y. Jakubiec, and A.~Ribeiro, ``A saddle point algorithm for
  networked online convex optimization,'' \emph{IEEE Transactions on Signal
  Processing}, vol.~63, no.~19, pp. 5149--5164, Oct 2015.

\bibitem{yuan_can_2020}
K.~Yuan, W.~Xu, and Q.~Ling, ``Can {Primal} {Methods} {Outperform}
  {Primal}-dual {Methods} in {Decentralized} {Dynamic} {Optimization}?''
  \emph{arXiv:2003.00816 [math]}, 2020.

\bibitem{yuan_convergence_2016}
K.~Yuan, Q.~Ling, and W.~Yin, ``On the {Convergence} of {Decentralized}
  {Gradient} {Descent},'' \emph{SIAM Journal on Optimization}, vol.~26, no.~3,
  pp. 1835--1854, 2016.

\bibitem{bastianello_distributed_2020}
N.~Bastianello, A.~Simonetto, and R.~Carli, ``Distributed
  {Prediction}-{Correction} {ADMM} for {Time}-{Varying} {Convex}
  {Optimization},'' in \emph{54th Asilomar Conference on Signals, Systems and
  Computers}, Nov. 2020.

\bibitem{chen_fast_2012}
A.~I. Chen and A.~Ozdaglar, ``A fast distributed proximal-gradient method,'' in
  \emph{2012 50th {Annual} {Allerton} {Conference} on {Communication},
  {Control}, and {Computing} ({Allerton})}.\hskip 1em plus 0.5em minus
  0.4em\relax Monticello, IL, USA: IEEE, 2012, pp. 601--608.

\bibitem{aybat_asynchronous_2015}
N.~Aybat, Z.~Wang, and G.~Iyengar, ``An asynchronous distributed proximal
  gradient method for composite convex optimization,'' in \emph{International
  {Conference} on {Machine} {Learning}}, 2015, pp. 2454--2462.

\bibitem{shi_proximal_2015}
W.~Shi, Q.~Ling, G.~Wu, and W.~Yin, ``A {Proximal} {Gradient} {Algorithm} for
  {Decentralized} {Composite} {Optimization},'' \emph{IEEE Transactions on
  Signal Processing}, vol.~63, no.~22, pp. 6013--6023, 2015.

\bibitem{zeng_fast_2017}
J.~Zeng, T.~He, and M.~Wang, ``A fast proximal gradient algorithm for
  decentralized composite optimization over directed networks,'' \emph{Systems
  \& Control Letters}, vol. 107, pp. 36--43, 2017.

\bibitem{li_decentralized_2019}
Z.~Li, W.~Shi, and M.~Yan, ``A {Decentralized} {Proximal}-{Gradient} {Method}
  {With} {Network} {Independent} {Step}-{Sizes} and {Separated} {Convergence}
  {Rates},'' \emph{IEEE Transactions on Signal Processing}, vol.~67, no.~17,
  pp. 4494--4506, 2019.

\bibitem{alghunaim_decentralized_2019}
\BIBentryALTinterwordspacing
S.~A. Alghunaim, E.~K. Ryu, K.~Yuan, and A.~H. Sayed, ``Decentralized
  {Proximal} {Gradient} {Algorithms} with {Linear} {Convergence} {Rates},''
  \emph{arXiv:1909.06479 [math]}, 2019. [Online]. Available:
  \url{http://arxiv.org/abs/1909.06479}
\BIBentrySTDinterwordspacing

\bibitem{xu_distributed_2020}
J.~Xu, Y.~Tian, Y.~Sun, and G.~Scutari, ``Distributed {Algorithms} for
  {Composite} {Optimization}: {Unified} {Framework} and {Convergence}
  {Analysis},'' \emph{arXiv:2002.11534 [cs, math]}, 2020.

\bibitem{dixit_online_2019}
\BIBentryALTinterwordspacing
R.~Dixit, A.~S. Bedi, and K.~Rajawat, ``Online {Learning} over {Dynamic}
  {Graphs} via {Distributed} {Proximal} {Gradient} {Algorithm},''
  \emph{arXiv:1905.07018 [cs, eess, math]}, 2019. [Online]. Available:
  \url{http://arxiv.org/abs/1905.07018}
\BIBentrySTDinterwordspacing

\bibitem{taylor_exact_2018}
A.~B. Taylor, J.~M. Hendrickx, and F.~Glineur, ``Exact {Worst}-{Case}
  {Convergence} {Rates} of the {Proximal} {Gradient} {Method} for {Composite}
  {Convex} {Minimization},'' \emph{Journal of Optimization Theory and
  Applications}, vol. 178, no.~2, pp. 455--476, 2018.

\bibitem{nesterov_lectures_2018}
Y.~Nesterov, \emph{Lectures on {Convex} {Optimization}}, ser. Springer
  {Optimization} and {Its} {Applications}.\hskip 1em plus 0.5em minus
  0.4em\relax Cham: Springer International Publishing, 2018, vol. 137.

\end{thebibliography}

\end{document}